\documentclass{amsart}

\usepackage{amsmath,amssymb,amsfonts,amsthm,color}
\usepackage{eucal}
\usepackage{ stmaryrd }
\usepackage{tikz}
\usepackage{verbatim}


\usetikzlibrary{matrix, arrows}

\usepackage{graphicx,psfrag}

\usepackage{multicol}

\usepackage{ifthen} 


\vfuzz2pt 
\hfuzz2pt 

\evensidemargin 0.125 in \oddsidemargin 0.125 in

\parindent 24pt

\textheight 9.0 in \textwidth 6.0 in
\baselineskip 9.0 in \topmargin 0.1 in


\newcommand{\baseenvskip}{\baselineskip 6.53mm}


\newtheorem{thm}{Theorem}

\newtheorem{claim}[thm]{Claim}

\newtheorem{prop}[thm]{Proposition}

\newtheorem{corr}[thm]{Corollary}

\theoremstyle{remark}
\newtheorem{rmk}{Remark}[section]

\theoremstyle{remark}
\newtheorem{ex}{\rm{\textbf{Example}}}[section]

\newtheorem{dfn}{\rm{\textbf{Definition}}}[section]
\numberwithin{equation}{section} \numberwithin{thm}{section}
\numberwithin{rmk}{section} \numberwithin{figure}{section} \numberwithin{dfn}{section}
\numberwithin{ex}{section}


\newcommand{\ds}{\displaystyle}



\newcommand{\R}{\mathbb{R}}

\newcommand{\Z}{\mathbb{Z}}

\newcommand{\bd}{\partial}
\newcommand{\td}{\tilde}

\newcommand{\mbb}{\mathbb}
\newcommand{\Fron}{\text{Fron}}
\newcommand{\spt}{\text{spt}}

\newcommand{\llb}{\llbracket}
\newcommand{\rrb}{\rrbracket}
\newcommand{\cross}{\times}

\newcommand{\mass}{\mbb{M}}
\newcommand{\warrow}{\rightharpoonup}

\newcommand{\restr} {
\hskip2.5pt{\vrule height7pt width.5pt depth0pt}
\hskip-.2pt\vbox{\hrule height.5pt width7pt depth0pt}
\, }


\title[ Semi-Algebraic Homology and Mass Minimization ]{Homology Classes of  Semi-Algebraic Sets and Mass Minimization}

\author{Quentin Funk}\thanks{Partially supported by NSF VIGRE DMS-1148609}
\subjclass[2010]{49Q15, 14P10, 55N35}
    \email{qfunk@rice.edu}
\begin{document}

\begin{abstract}
We associate to any compact semi-algebraic set $X \subset \R^n$ a chain complex of currents $S_\ast (X)$ generated by integration along semi-algebraic submanifolds and we analyze the corresponding homology groups. In particular, we show that these homology groups satisfy the Eilenberg-Steenrod axioms and further, that they are isomorphic to both the ordinary singular homology groups of $X$ and to the homology groups generated by the integral currents supported on $X$.  Using this result and a certain neighborhood of $X$, we are able to prove homological mass minimization for integral currents supported on $X$, and verify that any cycle of $X$ that has sufficiently small mass is a boundary.
\end{abstract}

\maketitle

\section{Introduction}

One is often interested in finding special representatives of homology classes. For compact Riemannian manifolds, the existence of length-minimizing geodesics (possibly with multiplicity) in one dimensional integral homology classes raised the question of finding mass-minimizers in higher dimensional classes. Despite possible obstructions to homology classes having manifold representatives (demonstrated in \cite{Thom}) this question was resolved in 1960 by H. Federer and W. Fleming \cite{FFCurrents}--they obtained minimizers as certain weak limits of manifolds called integer-multiplicity rectifiable currents.

Similarly, they defined a homology theory via rectifiable currents as follows: if $I_k(A)$ denotes the group of normal (finite mass, finite boundary mass) $k$-dimensional integer-multiplicity rectifiable currents in $\mbb{R}^n$ (hereafter referred to as integral currents) supported on $A$, then the associated homology groups are:

\begin{equation*}
H_k^{IC}(A,B)\ =\dfrac{\{ T\in {I}_k(A)\ :  \partial T\in  I_{k-1}( B)\}}{\{R+\partial S\ :\ R\in  I_k(B), \ S\in  I_{k+1}(A)\}}.
\end{equation*}

\noindent Here the superscript $IC$ denotes the fact that these groups stem from arbitrary integral currents since we will later introduce homology groups defined by a subgroup of integral currents.

For pairs of compact Riemannian manifolds, or more generally Euclidean Lipschitz neighborhood retracts (ELNR), they  verified that integral currents gave homology isomorphic to the ordinary singular homology with coefficients in $\mbb Z$. This isomorphism combined with an isoperimetric inequality gives (in the compact case) a mass minimizing rectifiable representative for every ordinary homology class. For details, see \cite[4.4.1, 5.1]{GMT}.

One can ask the same questions for algebraic varieties or, more concretely, for semi-algebraic subsets of $\mbb{R}^n$: sets  defined by finitely many polynomial equalities and inequalities (see, for example, \cite[$\S$ 2]{RAG}). There is much interest in semi-algebraic spaces since they are an integral part of the study of real algebraic geometry and an example of an $o$-minimal structure, see \cite{RAG} and \cite{TTOS}, respectively.  Simple examples of compact semi-algebraic sets have cusps and fail to be Euclidean Lipschitz neighborhood retracts, however, they are homeomorphic to finite polyhedra so singular theory gives a natural homology theory that is uniquely characterized by the Eilenberg-Steenrod axioms.

 In this paper we verify that, for a pair $B\subseteq A$ of semi-algebraic subsets of $\mbb{R}^n$,  the ordinary singular homology groups are isomorphic with the integral current homology groups, and use this fact to generalize two classical results concerning rectifiable currents from the ELNR case to the semi-algebraic case, namely,  that for any compact semi-algebraic set,  there is a mass-minimizing rectifiable respresentative in each of its finitely many homology classes and we are able to prove (again, similarly to the ELNR case, seen in \cite[9.6]{FFCurrents}) that cycles of small enough mass in a compact semi-algebraic set are boundaries.

It is convenient to use other homology groups to verify these facts. Our main result is to establish an isomorphism between the rectifiable current homology groups and the more well-behaved homology given by the much smaller subclass of semi-algebraic chains \cite{SRAMI, TPSS}: those integral currents whose support and boundary support are semi-algebraic sets. Further, semi-algebraic maps (i.e. continuous maps with semi-algebraic graphs) are suitable morphisms for semi-algebraic chains--even though they may fail to be locally Lipschitz. Substituting semi-algebraic sets  and maps into the definitions above, we obtain a homology theory that satisfies the Eilenberg-Steenrod Axioms (see Section \ref{sec:Eilenberg}). Therefore, in the compact case, the  semi-algebraic homology coincides with ordinary singular theory. We appreciate the question about this of Blaine Lawson, which stimulated this work.

The main idea for this paper comes from an attempt to mimic the proof of the mass minimization result for compact Riemannian manifolds, proven by H. Federer in \cite{GMT}. The main point of that proof was that any such manifold could be isometrically embedded in Euclidean space as a Lipschitz retract of some of its open neighborhoods on which tools such as the deformation theorem could be applied. Unfortunately, a semi-algebraic set need not be a Lipschitz retract of \emph{any} of its open neighborhoods--as can be seen by simple examples such as $\{(t^2, t^3) | t\in \R\} \subset \R^2$. 

However, in loose analogy with the ELNR case, given a semi-algebraic $X\subset \R^n$, there are arbitrarily small neighborhoods which are semi-algebraically contractible--and thus the inclusion of $X$ into these neighborhoods induces a surjective chain map between the groups of semi-algebraic chains, and it is this surjection that enables us to generalize the other results to this context once the above isomorphism is established. 

Since each semi-algebraic chain is an integral current, proving the isomorphism reduces to showing the following is true in (pairs of) compact semi-algebraic sets:

\begin{enumerate}
\item Every semi-algebraic cycle that bounds an integral chain also bounds a semi-algebraic chain.
\item Every rectifiable cycle is homologous to a semi-algebraic cycle.
\end{enumerate}

\noindent We verify these facts in Section \ref{sec:MT}, see Propositions \ref{prop:inj} and \ref{prop:surj}, respectively. (1) follows from retracting certain deformations of semi-algebraic cycles supported on $X$ within the open neighborhoods described above. (2) is more difficult and relies on the local Lipschitz contractibility of semi-algebraic sets, as shown by L. Shartser and G. Valette in \cite{deRham} (see also the independent work of L. Shartser, \cite{Shartserthesis}).

Related topics are also addressed in the interesting paper of T. De Pauw  \cite{Comphom}  which treats relations among integral currents, various homology theories, and Plateau problems. Similarly, integral current homology for chains in metric spaces is discussed in \cite{DHP}.

The main reference for the geometric measure theory contained in the following is H. Federer's treatise \cite{GMT}, but much of the theory can be found in the books of F. H. Lin and X. Yang, and the work of L. Simon, \cite{Lin}, \cite{Simon}, respectively. Similarly, we repeatedly use well-known properties of semi-algebraic sets, such as triangulability and the existence of stratifications. To the author's knowledge, the most comprehensive reference for this material is the book of J. Bochnak, M. Coste and M. Roy, \cite{RAG}. Similar results for the more general case of $o$-minimality may be found in the book of L. van den Dries, \cite{TTOS}. See also the paper of M. Edmundo and A. Woerheide \cite{Edmundo}, where classical homology theories were analyzed in this more general setting.

\section{Semi-Algebraic Chains: Definitions and Elementary Properties}
\label{sec:elem}

We denote by $\mathcal{D}^k(\mbb{R}^n)$ the set of compactly supported differential $k$-forms on $\mbb{R}^n$, and by $\mathcal{H}^\alpha$ the $\alpha$-dimensional Hausdorff measure on $\mbb{R}^n$. Finally, for a semi-algebraic subset $X$ of $\R^n$, we denote $\Fron(X) = \overline X \setminus X$ the \emph{frontier} of $X$.

If $f:M\subset \mbb{R}^m \rightarrow \mbb{R}^n$ is a semi-algebraic map with $M$ a semi-algebraic subset of $\mbb{R}^m$, then by \cite[2.5]{HardtHomotopy} we may find a finite collection $\{V_i\}_{i=1}^q$ of subsets of $M$ so that $M=\ds \cup V_i$ and so that the following holds:
\begin{enumerate}
\item each $V_i$ is a connected, real analytic, embedded submanifold of $\mbb{R}^n$ with $\Fron(V_i)$ being a union of lower dimensional elements of $\{V_i\}_{i=1}^q$.
\item $f(V_i)$ is a real analytic, semi-algebraic manifold and either $f|_{V_i}$ is a real analytic diffeomorphism onto $f(V_i)$, or $f|_{V_i}$ is real analytic and of constant rank less than $\dim(V_i)$.
\end{enumerate}

\noindent Such a partition of $M$ is known as a \emph{stratification} of $M$ and we call the elements of $\{V_i\}_{i=1}^q$ the \emph{strata}. We may also require that $\{f(V_i)\}_{i=1}^q$ is a stratification for $f(M)$ and further, if $A\subset M$ is a semi-algebraic subset, that $A$ is a union of some subcollection of $\{V_i\}_{i=1}^q$. A stratification satisfying this last property is said to be \emph{compatible} with $A$. 

Now suppose that $M$ is an oriented, compact, $k$-dimensional smooth semi-algebraic submanifold of $\mbb{R}^n$. Relabeling if necessary, let $\{V_i\}_{i=1}^\ell$ denote the strata such that $f|_{V_j}$ is of rank $k$. Set $\mathcal{N}=\{f(V_1)\}_{i=1}^\ell$, and choose the orientation on each $f(V_i)$ induced by pushing forward the orientation of $V_i$ inherited from the orientation of $M$ by $f|_{V_i}$.   Denote the inclusion map $f(V_i)\hookrightarrow \mbb{R}^n$ by $\phi_{V_i}$ and define a current $f_{\sharp} (\llb M \rrb)$ such that:

\begin{equation}
\label{eq:semichain}
f_{\sharp} (\llb M \rrb)(\omega)=\ds \sum_{i=1}^\ell  \int_{f(V_i)} \phi_{V_i}^\ast (\omega)
\end{equation}

\noindent for all $\omega \in \mathcal{D}^k(\mathbb{R}^n)$.

Even though $f$ is not Lipschitz, $f(V_i)$ has finite $\mathcal{H}^k$ measure by \cite[3.4.8 (3)]{GMT} and by Proposition \ref{prop:functoriality}, we see that any such current is an integral current.

 By combining terms where the sets $f(V_i)$ and $f(V_j)$ are equal, we may simplify the expression to:

\begin{equation}
\label{eq:semichain2}
f_{\sharp} (\llb M \rrb)(\omega)=\ds \sum_{N\in\mathcal{N}} n_N  \int_{N} \phi_{N}^\ast (\omega)
\end{equation}

\noindent where each integer $n_N$ depends on a choice of orientation of $N$ and whether each $f|_{V_i}$  preserves or reverses the orientation. If $\nu_N$ is an orientation form for $N\in\mathcal{N}$ consistent with the orientation chosen (i.e. a unit simple $k$-vectorfield on $N$ whose vectors span $T_x N$ at every point $x\in N$), then we have the alternate representation:

\begin{equation*}
f_{\sharp} (\llb M \rrb)(\omega)=\ds \sum_{N\in\mathcal{N}} n_N\int_N \phi_{N}^{\ast} (\omega)(x)(\nu_N (x) ) \, d \mathcal{H}^k (x).
\end{equation*}

 Our next goal is to show that the integral above may be re-written by the area formula as an integral over $V_i$ of the pullback form $(f|_{V_i})^\ast (\phi_{V_i}^\ast (\omega))$.
 
 Note that since $f|_{V_i}$ is smooth, for any compact exhaustion $\{V_{i,j}\}_{j\in \mbb N}$ of $V_i$, we have, for all $j$ that $f|_{V_{i,j}}$ has bounded derivative and is therefore Lipschitz. Therefore, denoting the inclusion $f(V_{i,j})\hookrightarrow \R^n$ by $\phi_{V_{i,j}}$, the area formula implies that:

\begin{equation}
\int_{V_{i,j}} J_k df|_{V_{i,j}}(x)  \, d \mathcal{H}^k (x) = \mathcal H^k(f(V_{i,j}) \leq \mathcal H^k (f(V_i)) <\infty.
\end{equation}

\noindent Therefore by applying the Monotone Convergence Theorem as $j\rightarrow \infty$, we obtain that the Jacobian $J_k df|_{V_{i}}$ is integrable over $V_i$. Next notice that

\begin{equation}
\int_{f(V_{i,j})}\phi_{V_{i,j}}^\ast (\omega)(x) (\nu_{f(V_{i,j})})  \, d \mathcal{H}^k (x) = \int_{V_{i,j}} (f|_{V_{i,j}})^\ast (\phi_{V_{i,j}}^\ast (\omega))(\nu_{V_{i,j}})  \, d \mathcal{H}^k (x). 
\end{equation}

\noindent Since $\phi$ is smooth on $\R^n$ and thus bounded in some neighborhood of $X$, the integrability of $J_k df|{V_{i}}$ allows us to apply the Dominated Convergence Theorem (again, as $j\rightarrow \infty$) to the above equality. Summing over $i$ then gives:

\begin{equation}
f_{\sharp} (\llb M \rrb)(\omega) = \sum_i \int_{V_i} (f|_{V_i})^\ast (\phi_{V_i}^\ast (\omega)).
\end{equation}

\noindent The above equality implies that our definition is invariant under refinement of the stratification.

Therefore, if we are given a different stratification of $M$, say $\{W_i\}_{i=1}^{\td \ell}$, by finding a common refinement for the two stratifications we see that the rule $f_\sharp (\llb M \rrb)$ is well defined.

For the remainder of this paper, we fix $X$, a semi-algebraic subset of $\mbb{R}^n$.

\begin{dfn}
A \emph{semi-algebraic k chain in} $\mbb{R}^n$ is a current constructed as above. Denote by $S_k(\mbb{R}^n)$ the set of all such currents. We define

\begin{equation*}
S_k(X)=\{S\in S_k(\mbb{R}^n) \ | \ \spt(S)\subset X\}
\end{equation*}

\noindent to be \emph{the set of semi-algebraic $k$ chains of $X$}.

\end{dfn}

It is easy to see that $S_k(X)$ is an Abelian group.

The following notation will be used frequently throughout the remainder of the paper: for any smooth, compact oriented $k$-dimensional submanifold $N\subset \mbb{R}^n$ we denote by $\llb N \rrb$ the $k$-current given by $\llb N \rrb (\omega) = \int_N \phi^\ast (\omega)$, where $\phi$ denotes the inclusion $N\hookrightarrow \mbb{R}^n$.

If $\nu_N$ is an orientation form for $N$ the area formula gives:

\begin{equation}
\int_N \phi^\ast (\omega)=\int_N \omega(x)(\nu_N (x) ) \, d \mathcal{H}^k (x)
\end{equation}

\noindent Proposition \ref{prop:classical} will show that this causes no ambiguity with the notation $f_\sharp (\llb M \rrb)$ defined above.

Arguing with the constancy theorem of \cite[4.1.31]{GMT} one can show the following.

\begin{prop}[{\cite[3.5]{HardtHomotopy}}]
\label{prop:functoriality}
For $M$ and $f$ as above $\llb M \rrb \in S_k(\R^m)$, $\llb \partial M \rrb \in S_{k-1}(\R^m)$ and, further, $\partial(f_\sharp(\llb M \rrb))=f_\sharp ( \llb \partial M \rrb )$. In particular, since the boundary of a semi-algebraic submanifold is a finite, disjoint union of lower dimensional semi-algebraic submanifolds, the chain  $\partial (f_\sharp(\llb M \rrb ) ) \in S_{k-1}(\R^m)$.

\end{prop}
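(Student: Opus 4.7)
The first two assertions follow immediately from the construction: taking $f=\mathrm{id}_{\R^m}$ together with any stratification of $\R^m$ compatible with $M$ gives $\mathrm{id}_\sharp\llb M\rrb=\llb M\rrb$, so $\llb M\rrb\in S_k(\R^m)$. Because $M$ is a compact semi-algebraic manifold, $\partial M$ is a compact semi-algebraic set that is a finite disjoint union of $(k-1)$-dimensional smooth semi-algebraic submanifolds, and applying the same reasoning yields $\llb\partial M\rrb\in S_{k-1}(\R^m)$; hence also $f_\sharp\llb\partial M\rrb\in S_{k-1}(\R^n)$.

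The heart of the proposition is the boundary identity. The plan is to apply the constancy theorem to reduce to comparing integer multiplicities on finitely many $(k-1)$-dimensional manifolds. First refine the given stratification $\{V_i\}$ so that $\partial M$ is a union of strata, $\{f(V_i)\}$ stratifies $f(M)$, and $f(\partial M)$ is a union of strata of $f(M)$. Let $Y\subset\R^n$ denote the union of all strata of $f(M)$ of dimension strictly less than $k$ together with $f(\partial M)$. Then $Y$ is a semi-algebraic set of Hausdorff dimension at most $k-1$: since $f_\sharp\llb M\rrb$ is smooth on each open $k$-stratum of $f(M)$, $\spt(\partial(f_\sharp\llb M\rrb))\subset Y$, and by construction $\spt(f_\sharp\llb\partial M\rrb)\subset Y$. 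On each open $(k-1)$-stratum $N\subset Y$, both sides restrict to a normal $(k-1)$-current supported in $N$ with boundary supported in $\overline{N}\setminus N$, so the constancy theorem \cite[4.1.31]{GMT} forces each restriction to equal an integer multiple of $\llb N\rrb$.

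To match these integers on a chosen stratum $N$, pick $x\in N$ and a neighborhood $U$ of $x$ meeting only $N$ and strata of $Y$ of dimension less than $k-1$, and test against a form $\omega$ supported in $U$. By the area-formula identity derived just before the definition,
\begin{equation*}
\partial(f_\sharp\llb M\rrb)(\omega)
\;=\;f_\sharp\llb M\rrb(d\omega)
\;=\;\sum_{i=1}^{\ell}\int_{V_i}(f|_{V_i})^{\ast}\phi_{V_i}^{\ast}(d\omega).
\end{equation*}
Since each $V_i$ is smooth and oriented and $(f|_{V_i})^{\ast}\phi_{V_i}^{\ast}(\omega)$ is compactly supported within $\overline{V_i}$, Stokes' theorem on each $V_i$ converts its summand into boundary contributions over the strata of $\partial V_i$. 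These boundary strata split into two types: (I) those contained in $\partial M$, whose contributions sum by construction to $f_\sharp\llb\partial M\rrb(\omega)$, and (II) internal faces shared with another $k$-stratum $V_j$, on which $V_i$ and $V_j$ induce opposite orientations because $M$ is a manifold at interior points.

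The main obstacle will be the bookkeeping for type (II) faces: when $f$ maps an internal face shared by $V_i$ and $V_j$ onto a piece of $N$, one must verify that the signed contributions from $V_i$ and $V_j$ cancel, accounting both for the opposite manifold-orientations of $V_i,V_j$ on the face and for whether $f|_{V_i}$, $f|_{V_j}$ preserve or reverse orientation onto their images. This cancellation is exactly what the signed integers $n_N$ in (\ref{eq:semichain2}) are designed to encode; once type (II) pairs cancel, the remaining type (I) contributions produce precisely the integer multiplicity of $\llb N\rrb$ in $f_\sharp\llb\partial M\rrb$, yielding equality on every $(k-1)$-stratum and therefore everywhere.
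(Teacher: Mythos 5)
Your first paragraph is fine, and the first half of your main argument (locate both currents' supports in a $(k-1)$-dimensional semi-algebraic set $Y$, stratify, and apply the constancy theorem of \cite[4.1.31]{GMT} on each top-dimensional stratum) is exactly the route the paper gestures at when it says the proposition is proved ``arguing with the constancy theorem''; note, though, that the restrictions are not known to be \emph{normal} currents at this stage (finite boundary mass of $\partial(f_\sharp\llb M\rrb)$ is in effect part of the conclusion), so you must invoke the constancy theorem for (integral) flat chains, which fortunately is what \cite[4.1.31]{GMT} provides.

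The genuine gap is in the second half, and it is twofold. First, the Stokes step is not legitimate as stated: the form $(f|_{V_i})^{\ast}\phi_{V_i}^{\ast}(\omega)$ is defined and smooth only on the open stratum $V_i$, because $f$ is merely continuous and semi-algebraic on $M$ and $df|_{V_i}$ can blow up along $\Fron(V_i)$; the form need not have any boundary trace, so you cannot ``convert its summand into boundary contributions over the strata of $\partial V_i$.'' Even in the model case $f=\mathrm{id}$, the assertion that $\partial\llb V_i\rrb$ is given by integration over the frontier strata with the expected multiplicities is a statement of the same nature as the proposition itself, not an instance of the classical Stokes theorem. Second, the multiplicity bookkeeping that you defer --- cancellation over type (II) faces and identification of the type (I) contributions with $f_\sharp\llb\partial M\rrb(\omega)$ --- is precisely the content of the proposition: several internal faces (including faces of rank-deficient strata that were dropped from the defining sum, and faces coming from different sheets when $f$ is not injective) can map onto the same $(k-1)$-stratum $N$, and saying the cancellation ``is exactly what the signed integers $n_N$ are designed to encode'' is circular, since those integers were defined for the $k$-dimensional images, not for the boundary. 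What is still needed after your constancy-theorem reduction is a computation of the integer multiplicity of each side at a generic point of each $(k-1)$-stratum (e.g.\ by a local degree/slicing argument), which is the heart of Hardt's proof in \cite[3.5]{HardtHomotopy}, the reference to which the paper itself defers.
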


We can use this to classify semi-algebraic chains without the need for the maps $f$:

\begin{thm}[Alternative description of semi-algebraic chains  \cite{HardtHomotopy} ]
\label{thm:Altern}
If $V_1,...V_r$ are disjoint real-analytic semi-algebraic oriented $k$-dimensional submanifolds of $\mbb{R}^n$ such that each $\overline{V_i}$ is a compact subset of $X$, and $n_1,...n_r$ are integers, then:

\begin{equation*}
\ds\sum_{i=1}^r n_i \llb V_i \rrb \in S_k(X).
\end{equation*}

\noindent Conversely, every element of $S_k(X)$ admits such a representation.
\end{thm}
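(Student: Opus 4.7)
The theorem has two directions. For the reverse direction, I plan to read the representation directly off equation~(\ref{eq:semichain2}). Given $S \in S_k(X)$, the definition provides an oriented compact $k$-dimensional smooth semi-algebraic submanifold $M$ and a semi-algebraic map $f$ with $S = f_\sharp \llb M \rrb$, and~(\ref{eq:semichain2}) then displays $S = \sum_{N \in \mathcal{N}} n_N \llb N \rrb$, where $\mathcal{N}$ consists of the images under $f$ of the full-rank strata of $M$. By the stratification properties stated at the start of Section~\ref{sec:elem}, these $N$ are pairwise disjoint real-analytic semi-algebraic $k$-dimensional submanifolds of $\R^n$, oriented by push-forward of the orientation of $M$. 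After discarding strata with $n_N = 0$, for each remaining $N$ we have $N \subseteq \spt(S)$, and since the support is closed, $\overline{N} \subseteq \spt(S) \subseteq X$, which yields the required representation.

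For the forward direction, since $S_k(X)$ is an abelian group it suffices to show $\llb V \rrb \in S_k(X)$ for a single real-analytic semi-algebraic oriented $k$-submanifold $V$ with $\overline V \subseteq X$ compact. Apply semi-algebraic triangulation to $\overline V$ compatibly with $V$ and $\Fron(V)$: this presents $V$ as the disjoint union of the interiors of finitely many closed $k$-simplices $\tau_1,\ldots,\tau_s$, each inheriting an orientation from $V$. For each $j$, take a closed $k$-ball $B_j$, placed disjointly in a large enough $\R^m$, together with a semi-algebraic map $f_j: B_j \to \R^n$ whose image is $\tau_j$ and whose restriction to the open ball is a real-analytic diffeomorphism onto $\tau_j^\circ$; such an $f_j$ exists because closed balls and closed simplices are semi-algebraically homeomorphic. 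Setting $M = \bigsqcup_j B_j$ and $f = \bigsqcup_j f_j$, equation~(\ref{eq:semichain2}) then yields $f_\sharp \llb M \rrb = \sum_j \llb \tau_j^\circ \rrb = \llb V \rrb$; the last equality uses that $V$ differs from $\bigcup_j \tau_j^\circ$ by a set of $\mathcal{H}^k$-measure zero, which is invisible to the integration current. Taking signed copies and summing over $i$ then realizes $\sum n_i \llb V_i \rrb \in S_k(X)$.

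The main obstacle is the discrepancy between the closed $k$-balls $B_j$, which are manifolds with boundary, and the ``smooth semi-algebraic submanifold'' $M$ of the setup in Section~\ref{sec:elem}. This is handled by observing that~(\ref{eq:semichain2}) only sees strata on which $f$ has rank $k$, so the boundary spheres $\partial B_j$, being $(k-1)$-dimensional, contribute nothing to the push-forward. Invariance of $f_\sharp \llb M \rrb$ under refinement of stratification, verified via the area formula and Dominated Convergence argument following~(\ref{eq:semichain}) in Section~\ref{sec:elem}, ensures that the specific choice of ball parameterizations is immaterial. Verifying orientation compatibility and that the disjoint ambient placement can be arranged semi-algebraically is then routine.
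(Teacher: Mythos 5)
The paper itself does not prove this theorem; it quotes it from \cite{HardtHomotopy}, so there is no in-paper argument to compare against. Your reverse direction is essentially right: reading the decomposition off Equation \ref{eq:semichain2}, using that the images $f(V_i)$ may be taken to form a stratification of $f(M)$ (so distinct images are disjoint, real-analytic, semi-algebraic), discarding the terms with $n_N=0$, and concluding $\overline N \subseteq \spt(S) \subseteq X$. One small step you assert without proof and should record: $N\subseteq \spt(S)$ when $n_N\neq 0$ uses the frontier condition of the stratification, which keeps the closures of the \emph{other} $k$-dimensional image strata away from $N$, so that near a point of $N$ the current is locally $n_N\llb N\rrb \neq 0$.

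The forward direction, however, has a genuine gap at the construction of $f_j$. The sets $\tau_j$ are not affine simplices but the images $h(\sigma_j)$ of simplices under the triangulating map $h:|K|\rightarrow \overline V$, and $h$ is only a semi-algebraic \emph{homeomorphism}, with no smoothness. So the justification ``closed balls and closed simplices are semi-algebraically homeomorphic'' produces only a semi-algebraic homeomorphism $B_j\rightarrow \tau_j$; it does not produce a map whose restriction to the open ball is a real-analytic diffeomorphism onto $\tau_j^\circ$, and it is exactly that diffeomorphism property that your identity $f_\sharp\llb M\rrb=\sum_j\llb\tau_j^\circ\rrb$ silently uses, since it is what forces each curved simplex to appear in Equation \ref{eq:semichain2} with coefficient exactly $+1$ once $B_j$ is suitably oriented. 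Two ways to repair this. (i) Drop the smoothness demand: take $f_j$ to be the homeomorphism above and compute $f_{j\sharp}\llb B_j\rrb$ from the definition. Injectivity rules out rank drop on $k$-dimensional strata, so they map diffeomorphically onto disjoint open subsets of $V$ covering $\tau_j^\circ$ up to an $\mathcal{H}^k$-null set (semi-algebraic maps do not raise dimension); the signs $\pm 1$ comparing the pushed-forward orientations with the orientation of $V$ are all equal because a homeomorphism from the connected oriented manifold $B_j^\circ$ onto $\tau_j^\circ$ has constant local degree, and orienting $B_j$ accordingly gives $f_{j\sharp}\llb B_j\rrb=\llb\tau_j^\circ\rrb$. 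This degree argument is precisely the ``orientation compatibility'' you dismiss as routine, but it is the crux and must be written out. (ii) Alternatively, cite a refined semi-algebraic triangulation theorem in which $h$ restricted to each open simplex is a Nash (semi-algebraic, real-analytic) diffeomorphism onto its image; with that strengthening your construction goes through as written. As stated, though, the existence claim for $f_j$ is not supported by the reason you give.
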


 Thus the set of real-analytic semi-algebraic oriented k-dimensional submanifolds generate the group $S_k(X)$ of semi-algebraic $k$ chains in $X$ just as the set of {\it oriented $k$ simplices} in $\R^n$  generate the group ${\mathcal P}_k(\R^n)$ of $k$ dimensional {\it polyhedral chains} in $\R^n$. Clearly ${\mathcal P}_k(\R^n)$ is a subgroup of $S_k(\R^n)$.

We now turn to functoriality of the semi-algebraic homology theory. To any semi-algebraic map $g: X \rightarrow Y$, we associate a map $g_\sharp: S_k(X)\rightarrow S_k(Y)$   by:

\begin{equation*}
g_\sharp \left( f_\sharp (\llb M \rrb ) \right)=(g\circ f)_{\sharp} (\llb M \rrb ).
\end{equation*}

\noindent To see that the rule $g_\sharp$ is well defined suppose $h_\sharp(\llb N \rrb)=f_\sharp (\llb M \rrb)$. Choose a stratification $\{M_i\}_{i=1}^m$ for $M$ and $\{N_j\}_{j=1}^\ell$ for $N$ compatible with the maps $f$ and $g \circ f$ (respectively, $h$ and $g \circ h$). Such stratifications may be obtained, for example, by refining stratifications compatible with $f$ (or $h$). For any $\omega \in \mathcal{D}^k (\mbb{R}^n)$:

\begin{equation*}
\ds \sum_{i=1}^m \int_{f(M_i)} \omega(x)(\nu_{f(M_i)}(x))d\mathcal{H}^k(x)=\ds \sum_{j=1}^\ell \int_{h(N_j)} \omega(x)(\nu_{h(N_j)}(x))d\mathcal{H}^k(x).
\end{equation*}

\noindent However, since each $h(n_j)$ and $f(M_i)$ have finite $\mathcal{H}^k$ measure this equality holds for any $\mathcal{H}^k$ summable $k$-form on $f(M)\cup h(N)$ by approximation. In particular, for $\omega \in \mathcal{D}^k(Y)$, $g^\ast(\omega)$ is such a summable $k$-form, which gives:

\begin{equation*}
\ds \sum_{i=1}^m \int_{f(M_i)} g^\ast(\omega)(x)(\nu_{f(M_i)}(x))d\mathcal{H}^k(x)=\ds \sum_{j=1}^\ell \int_{h(N_j)} g^{\ast}(\omega)(x)(\nu_{h(N_j)}(x))d\mathcal{H}^k(x)
\end{equation*}

\noindent However, by Equation \ref{eq:semichain} the left side of this equation is $(g\circ f)_\sharp (\llb M \rrb)$ and the right side is $(g\circ h)_\sharp (\llb N \rrb)$, so the rule $g_\sharp$ given above is well-defined.

The following proposition allows us to relate classical constructions using currents to semi-algebraic chains as defined above. We follow the notations for currents found in \cite[$\S$4]{GMT}.

In particular, for a Lipschitz map $f: X\rightarrow Y$, and a $k$-current of finite mass $T$, such that $f|_{\spt(T)}$ is proper, we denote the classical \emph{push forward} of the current $T$ (defined by smooth approximation in \cite[4.1.7]{GMT}) by $f_\sharp^{IC} (T)$.

\begin{prop}
\label{prop:classical}
For $S\in S_k(X)$, $A\subseteq X$ a semi-algebraic subset, if $Y\subset \mbb{R}^{m}$ semi-algebraic and  $f:X\rightarrow Y\subset \mbb{R}^m$ a smooth semi-algebraic map. Then

\begin{enumerate}
\item $S\restr A$ is a semi-algebraic chain.
\item The relation $f_{\sharp}(S)=f^{IC}_\sharp (S)$ holds whenever both sides are defined. That is, whenever $f$ is a Lipschitz semi-algebraic map such that $f|_{\spt(S)}$ is proper.
\item The current $\llb [0,1] \rrb \cross S$ is a semi-algebraic chain.

\end{enumerate}
\end{prop}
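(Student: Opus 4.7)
The plan is to use Theorem \ref{thm:Altern} to reduce each claim to the case $S = \llb V \rrb$ for a single real-analytic semi-algebraic oriented $k$-dimensional submanifold $V \subset \R^n$ with $\overline V$ compact in $X$: all three operations on $S$ are $\Z$-linear (and the same is clear on the right-hand sides), so handling a single summand suffices.

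For (1), the mass measure of $\llb V \rrb$ is $\mathcal H^k \restr V$, so $(\llb V \rrb \restr A)(\omega) = \int_{V \cap A} \omega(\nu_V)\, d\mathcal H^k$. The intersection $V \cap A$ is a semi-algebraic subset of $V$, and I would stratify it semi-algebraically into real-analytic strata $\{W_j\}$, each a real-analytic semi-algebraic submanifold. Lower-dimensional strata contribute nothing to the integral, while the top-dimensional ones are open in $V$ (hence inherit its orientation) with closures contained in the compact set $\overline V \subset X$. Summing gives $\llb V \rrb \restr A = \sum_{j \colon \dim W_j = k} \llb W_j \rrb$, which is a semi-algebraic chain by Theorem \ref{thm:Altern}.

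For (3), I would write $\llb [0,1] \rrb \cross \llb V \rrb$ using the cross product of currents in the sense of \cite[4.1.8]{GMT}. Since $\{0,1\} \cross V$ has $\mathcal H^{k+1}$-measure zero, this current coincides with $\llb (0,1) \cross V \rrb$, and $(0,1) \cross V$ is a real-analytic semi-algebraic oriented $(k+1)$-dimensional submanifold of $\R^{n+1}$ whose closure $[0,1] \cross \overline V$ is compact. Theorem \ref{thm:Altern} then identifies $\llb [0,1] \rrb \cross S$ with a semi-algebraic $(k+1)$-chain.

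Part (2) is the main obstacle, since it requires matching two a priori distinct constructions. The derivation already carried out in the excerpt shows that for smooth semi-algebraic $f$ the semi-algebraic pushforward $f_\sharp \llb V \rrb(\omega)$ equals $\int_V f^\ast \omega$; I would extend this to Lipschitz semi-algebraic $f$ by choosing a stratification on which $f$ is real-analytic on each stratum and interpreting $f^\ast \omega$ almost everywhere in the Rademacher sense. On the other side, $f^{IC}_\sharp \llb V \rrb$ evaluated on $\omega$ also equals $\int_V f^\ast \omega$, either directly from the smooth-approximation definition of \cite[4.1.7]{GMT} or via the area formula with signed multiplicity. The main technical care is in verifying that the orientations and integer multiplicities produced by the semi-algebraic stratification agree with the signed sheet counts output by the area formula; once done, $\Z$-linearity extends the equality from $\llb V \rrb$ to all of $S$.
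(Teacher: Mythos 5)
Your proposal is correct and follows essentially the same route as the paper: reduce via Theorem \ref{thm:Altern} to a single $\llb V \rrb$, prove (1) by stratifying $V\cap A$ and discarding the lower-dimensional strata, and prove (2) by stratifying $f|_{\spt(S)}$ and matching the classical integral representation of $f^{IC}_\sharp$ (the paper cites Simon 26.21 plus the area formula) with the stratum-wise semi-algebraic pushforward --- the orientation and multiplicity bookkeeping you defer is handled in the paper by noting $f$ is a diffeomorphism on each top-dimensional stratum, so that $f^{IC}_\sharp \llb V \rrb = \sum_i \epsilon_i \llb f(A_i) \rrb = f_\sharp \llb V \rrb$. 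The only cosmetic difference is in (3), where the paper applies (1) to $\llb (-1,2)\cross V \rrb$ with $A=[0,1]\cross \R^n$ while you identify $\llb [0,1] \rrb \cross \llb V \rrb$ directly with $\llb (0,1)\cross V \rrb$; both arguments are fine.
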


\begin{proof}
By linearity and Theorem \ref{thm:Altern}, we lose no generality in assuming that $S$ is of the form $\llbracket V \rrbracket$, where $V$ is a smooth semi-algebraic $k$ manifold in $\mbb{R}^n$ with compact closure. Let $\nu_V$ be the orientation form for $V$.

 Applying the stratification theorem \cite[9.1.8]{RAG} to the set $A$ and the singleton family $\{ V\cap A\}$, we get a stratification $\{A_i\}_{i=1}^p$ for $A$, where, for some $\ell \leq q$,  $V\cap A=\cup_{i=1}^\ell A_i$ a union of smooth, semi-algebraic strata. Relabeling if necessary, we remove from this union any stratum which is of dimension less than $k$. The remaining strata are smooth semi-algebraic manifolds of dimension $k$, and hence $k$-dimensional rectifiable sets. This implies that, for $\mathcal{H}^k$ a.e. $x\in A_i \cap V$, $\nu_V$ is an orientation form for $A_i$ (see, for example, \cite[2.85]{AFP}). Thus, for any $k$-form $\omega$:

\begin{equation*}
S\restr A (\omega)=\ds \int_{V\cap A} \omega(x)( \nu_V(x)) d \mathcal{H}^k(x)=\ds \sum_{i=1}^{\ell} \int_{A_i}\omega(x)(\nu_V(x)) d \mathcal{H}^k(x)=\sum_{i=1}^\ell \llb A_i \rrb (\omega)
\end{equation*}

\noindent and hence $S\restr A$ is a semi-algebraic chain, proving (1).

Since $V=\spt(S)$, we may stratify the map $f|_{\spt(S)}: \spt(S)\rightarrow Y$ as in \cite[2.5]{HardtHomotopy} to get a stratification of $V$ with each strata $A_i$ a smooth submanifold with $f|_{A_i}$ a diffeomorphism. By \cite[26.21]{Simon}, for any $\omega \in \mathcal{D}^k(\mbb{R}^m)$,

\begin{equation*}
f_\sharp^{IC} (S)(\omega)=\ds\int_V \omega(f(x))(Df_\sharp \nu_V(x)) d\mathcal{H}^k(x)=\ds\sum_{i=1}^n \int_{A_i}\omega(f(x))(Df_\sharp \nu_V(x)) d\mathcal{H}^k(x)
\end{equation*}

Since $f$ is a diffeomorphism on each $A_i$, each integrand in the last sum is

\begin{equation*}
\epsilon_i \int_{f(A_i)} \omega(x)(\nu_{A_i}(x)) d \mathcal{H}^k(x)
\end{equation*}

\noindent where $\epsilon_i=\pm 1$, depending on orientations. So, $f_\sharp^{IC} (S)=\sum_{i=1}^{n} \epsilon_i \llb f(A_i) \rrb$.

On the other hand, this semi-algebraic chain coincides with $f_\sharp (S)$ as defined in the previous section, giving (2).

Finally, (3) follows immediately by applying (1) to the set $A=[0,1]\cross \mbb{R}^n$ and the semi-algebraic chain $\llb (-1,2)\cross V \rrb$.

\end{proof}

\begin{rmk}
\label{rmk:homo}
The preceding Proposition allows us to generalize the well-known homotopy formula for currents to the semi-algebraic context. To this end, suppose that $M\subset \R^m$ is a smooth, oriented compact semi-algebraic manifold of dimension $k$ and $h: M \times [0,1] \rightarrow \R^n$ is a semi-algebraic map. Proposition \ref{prop:functoriality} guarantees that $\partial h_\sharp(\llb M \times [0,1] \rrb) =h_\sharp( \llb \partial \left( M\times [0,1] \right)\rrb)$. Consider the three sets $M\times \{0\}$, $M \times \{1\}$ and $\partial M \times [0,1]$. Since any two of these three sets intersect in a semi-algebraic set of dimension at most $k-2$, by stratifying the set $\partial (M\times [0,1])$ compatibly with the three sets listed above and their pairwise intersections, we see that Equation \ref{eq:semichain2} guarantees that the current $h_\sharp( \llb \partial\left( M\times [0,1]\right) \rrb)$ decomposes (up to sets of $\mathcal H^k$ measure zero) into $h_\sharp( \llb M \times \{1\} \rrb) - h_\sharp(\llb M \times \{0\} \rrb) + h_\sharp( \llb \partial M\times [0,1] \rrb)$ as in the classical case. 
\end{rmk}

\section{The Homology of Semi-Algebraic Chains}
\label{sec:Eilenberg}
By Proposition \ref{prop:functoriality} the semi-algebraic chains give a chain complex, $S_{\ast} (X)$. Similarly, for any semi-algebraic subset $A\subseteq X$, we can define a chain complex for the semi-algebraic pair $(X,A)$ by defining the Abelian group of relative cycles:

\begin{equation*}
\mathcal{Z}_k (X,A)=\{S \  | \ S\in S_k(X),   \ \partial S\in S_{k-1}(A)\}
\end{equation*}

\noindent (with the convention that if $k=0$ the condition $\partial S\in S_{k-1}(A)$ is trivially satisfied) and the subgroup of relative boundaries:

\begin{equation*}
\mathcal{B}_k (X,A)=\{S+\partial L \  | \ S\in S_k(A),  \ L\in S_{k+1}(X) \}.
\end{equation*}

\noindent we then define the $k$-dimensional semi-algebraic homology group of the pair $(X,A)$ as the quotient group:

\begin{equation*}
H_k^{SA}(X,A)=\mathcal{Z}_k(X,A) / \mathcal{B}_k(X,A).
\end{equation*}

\noindent As usual, in the case where $A=\phi$, we simply denote these sets as $\mathcal{Z}_k(X)$, $ \mathcal{B}_k(X)$ and $H_k^{SA}(X)$.

Working in the category of semi-algebraic pairs of sets and (continuous) semi-algebraic maps, we seek to show the following:

\begin{thm}
\label{thm:EilenbergSteenrod}
The homology groups associated to the chain complex of semi-algebraic chains on a semi-algebraic set $X$ satisfy the Eilenberg-Steenrod Axioms (1)-(7) listed below.
\end{thm}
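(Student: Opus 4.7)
The plan is to verify each of the seven Eilenberg--Steenrod axioms in turn for the chain complex $S_*(X,A)$ together with the assignment $f \mapsto f_\sharp$. Four of them are essentially formal consequences of the constructions of Section \ref{sec:elem}. The identity and composition axioms follow immediately from the definition $g_\sharp(f_\sharp(\llb M \rrb)) := (g\circ f)_\sharp(\llb M \rrb)$ together with the verification of well-definedness given in the paragraphs after the definition of $S_k(X)$. Naturality of $\partial$ with respect to induced maps is recorded in Proposition \ref{prop:functoriality}. The dimension axiom is immediate: for a single point $p$ one has $S_k(\{p\}) = 0$ for $k > 0$ and $S_0(\{p\}) \cong \Z$, so $H_0^{SA}(\{p\}) \cong \Z$ and $H_k^{SA}(\{p\}) = 0$ otherwise.

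Exactness of the long sequence of the pair $(X,A)$ reduces to a standard diagram chase once one observes that our definition $H_k^{SA}(X,A) = \mathcal{Z}_k(X,A)/\mathcal{B}_k(X,A)$ agrees with the homology of the quotient chain complex $S_*(X)/S_*(A)$. The connecting homomorphism $\partial_\ast : H_k^{SA}(X,A) \to H_{k-1}^{SA}(A)$ is then $[T] \mapsto [\partial T]$, which is well-defined because $\partial T \in S_{k-1}(A)$ by the very definition of $\mathcal{Z}_k(X,A)$.

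For the homotopy axiom, I would construct a chain homotopy operator directly. Given a semi-algebraic pair-homotopy $h: (X,A)\times[0,1] \to (Y,B)$ between semi-algebraic maps $f,g : (X,A) \to (Y,B)$, and a generator $S = \alpha_\sharp(\llb M \rrb) \in S_k(X)$, define
\[
P(S) := h_\sharp\!\bigl((\alpha \times \mathrm{id}_{[0,1]})_\sharp(\llb M\times[0,1]\rrb)\bigr) \in S_{k+1}(Y).
\]
By Proposition \ref{prop:classical}(3) and functoriality of the push-forward, $P(S)$ is indeed a semi-algebraic chain. Applying Remark \ref{rmk:homo} to the map $h \circ (\alpha\times\mathrm{id}_{[0,1]})$ yields the chain homotopy identity $\partial P(S) + P(\partial S) = g_\sharp(S) - f_\sharp(S)$. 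Since $h(A\times[0,1]) \subset B$, $P$ restricts to a map $S_*(A) \to S_*(B)$, giving $f_* = g_*$ on both absolute and relative homology.

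The main obstacle is the excision axiom. Given semi-algebraic $U \subset A \subset X$ with $\overline{U} \subset \mathrm{int}_X(A)$, I would first invoke semi-algebraic triangulability to choose a triangulation $\tau$ of $X$ simultaneously compatible with the sets $A$, $U$, $\overline{U}$, and $\mathrm{int}_X(A)$. Next, I would adapt the Federer--Fleming deformation theorem for integral currents to this polyhedral grid: given a relative cycle $T \in \mathcal{Z}_k(X,A)$, build a piecewise semi-algebraic deformation, fixed outside $\mathrm{int}_X(A)$, that pushes $T$ onto the $k$-skeleton of $\tau$. Proposition \ref{prop:classical} keeps each intermediate chain semi-algebraic. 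Splitting the resulting polyhedral chain into cells inside $U$ (which, being contained in $A$, contribute to $\mathcal{B}_k$) and cells outside $U$ produces a decomposition $T = T' + R + \partial S$ with $\mathrm{spt}(T') \subset X\setminus U$, $R \in S_k(A)$, and $S \in S_{k+1}(X)$, establishing surjectivity of the map induced by the inclusion on relative homology. Injectivity follows by applying the same argument to a bounding $(k+1)$-chain. The technical heart of the axiom lies in arranging this deformation to remain semi-algebraic while controlling supports and respecting the pair structure, which is where I expect the bulk of the work to sit.
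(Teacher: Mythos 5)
Your handling of axioms (1), (2), (3), (5), and (7) is correct and essentially the paper's argument: identity and composition from well-definedness of $g_\sharp$, naturality of $\partial$ from Proposition \ref{prop:functoriality}, the homotopy axiom from the product chain of Proposition \ref{prop:classical}(3) together with the semi-algebraic homotopy formula of Remark \ref{rmk:homo}, and the dimension axiom by inspecting $S_k(\{a\})$. Your exactness argument, via identifying $H_k^{SA}(X,A)$ with the homology of the quotient complex $S_*(X)/S_*(A)$ and running the standard long-exact-sequence machinery, is a legitimate (if slightly more algebraic) version of what the paper does by following \cite[4.4.1]{GMT}.

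The genuine gap is excision. Your plan is to triangulate $X$ compatibly with $U$, $A$, etc., and then invoke a semi-algebraic analogue of the Federer--Fleming deformation theorem, performed \emph{inside} $X$ and relative to the $k$-skeleton of that triangulation, ``fixed outside $\mathrm{int}_X(A)$.'' No such theorem is available: the deformation theorem of \cite[4.2.9]{GMT} (and its semi-algebraic refinement, Proposition \ref{prop:defo}) operates on cubical grids in ambient $\R^n$ and does not keep supports inside $X$, let alone push a chain onto the skeleton of an arbitrary triangulation of a singular semi-algebraic set. Moreover, the plan is internally inconsistent as stated: a deformation that moves $T$ onto the $k$-skeleton cannot simultaneously be the identity outside $\mathrm{int}_X(A)$ unless $T$ already lies in the skeleton there, and without some smallness or subdivision mechanism the support bookkeeping $\spt(T')\subset X\setminus U$, $R\in S_k(A)$ does not follow from the decomposition you write down. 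Carrying out a ``semi-algebraic deformation onto a triangulation'' (choosing projection centers off the $k$-dimensional support in each higher-dimensional cell, verifying the cone/homotopy identities, and controlling the pair structure) would be a substantial piece of work that your proposal only gestures at. The paper avoids all of this with a much lighter argument: since $\mathrm{dist}(\cdot,B)$ is semi-algebraic for any semi-algebraic $B$ (Claim \ref{claim:dist}), the set $E=X'\cap\{x : \mathrm{dist}(x,X'\setminus A')\leq \mathrm{dist}(x,X'\setminus X)\}$ is semi-algebraic, and restriction of a semi-algebraic chain to a semi-algebraic set is again semi-algebraic by Proposition \ref{prop:classical}(1); with these two facts Federer's excision argument from \cite[4.4.1]{GMT} goes through verbatim, with no deformation theorem needed. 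To close your gap, either supply the missing semi-algebraic deformation machinery relative to a triangulation, or replace that step by the restriction argument just described.
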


 Moreover, by the compactness requirements of Theorem \ref{thm:Altern}, they also satisfy the axiom of compact support (see, for example, \cite[$\S$ 26]{Munkres}).

 This theorem is similar in spirit and method to the result proved in \cite[4.4.1]{GMT}, where it is shown that the homology groups associated to the chain complex of integral currents on a Lipschitz neighborhood retract satisfies the same axioms.

 To this end, let $(X,A)$ and $(Y,B)$ be such pairs and $f:(X,A)\rightarrow (Y,B)$ be a semi-algebraic map of pairs. By Proposition \ref{prop:functoriality} the map $f_\sharp$ induces to a map on the homology groups defined above; for convenience, also denote the induced map by $f_\sharp$. Similarly, for $A\subset X_0 \subset X$ we also let $\partial$ denote the induced map on homology $\partial : H_k(X,X_0) \rightarrow H_k(X_0,A)$. The Eilenberg-Steenrod axioms are as follows:

 \begin{enumerate}
\item $(id_{X})_\sharp$ is the identity map on $H_k(X,A)$.
\item If $g: (Y,B)\rightarrow (Z,C)$ is another admissible map, $(g\circ f)_\sharp=g_\sharp \circ f_\sharp$.
\item If $C \subset A \subset X$, $C' \subset B \subset Y$ and $f|_B: (A, C) \rightarrow (B, C')$ is admissible, then

\begin{equation*}
\left(f|_{A}\right)_{\sharp}\circ \partial = \partial \circ f_{\sharp}
\end{equation*}

\item If $(X_0,A)\xrightarrow{i} (X,A) \xrightarrow{j} (X, X_0)$ is a series of admissible inclusions, then:

\begin{center}
\begin{tikzpicture}[description/.style={fill=white,inner sep=2pt}]
    \matrix (m) [matrix of math nodes, row sep=3em,
    column sep=2.5em, text height=1.5ex, text depth=0.25ex]
    { H_k(X_0,A)  & H_k(X,A)  & H_k(X,X_0) \\
H_{k-1}(X_0,A) &  H_{k-1}(X,A)  & H_{k-1}(X,A_0) \\ };
       \path[->,font=\scriptsize]
    (m-1-1)  edge node[auto] {$ i_\sharp $} (m-1-2)
     (m-1-2)       edge node[auto] {$ j_\sharp $} (m-1-3)
    (m-2-1)  edge node[auto] {$ i_\sharp $} (m-2-2)
     (m-2-2)       edge node[auto] {$ j_\sharp $} (m-2-3)
    (m-1-3) edge node[above] {$ \partial $} (m-2-1);
\end{tikzpicture}
    \end{center}

    \noindent is exact for $k >0$, and, for $k=0$, $i_\sharp(H_0(X_0,A_0))=H_0(X,A)$.

\item if $h: I\cross X \rightarrow Y$ is an admissible homotopy between $f$ and $f':X\rightarrow Y$, then $f_\sharp=f'_\sharp$.

\item If $g:(X,A)\rightarrow (X',A')$ is the inclusion map and $\overline{(X'\setminus A')} \cap \overline{(X' \setminus X)} = \phi$, then $g_\sharp : H_k(X, A) \rightarrow H_k (X', A')$ is an isomorphism.

\item For any $a\in\mbb{R}^n$, $H_0(\{a\})=\mbb{Z}$ and, for $k>0$, $H_k(\{a\})=0$.

 \end{enumerate}
 \vspace{5 mm}

The proofs follow as in \cite[4.4.1]{GMT}, from various properties of currents and semi-algebraic chains:

 \vspace{5 mm}

\begin{proof}

(1)   follows from well-definedness of the map $id_\sharp : S_k(X) \rightarrow S_k(X)$. Indeed, $f_\sharp (\llb M\rrb)=(id_X \circ f)_\sharp (\llb M \rrb)$.

 (2) is functoriality of the map $f \mapsto f_\sharp$ and follows from the discussion in the previous section.

 By Proposition \ref{prop:functoriality} $\partial (f_{\sharp}(g_{\sharp} (\llb M \rrb))=f_{\sharp}(\partial g_{\sharp} (\llb M \rrb))$. If $f(A)\subseteq B$ and $\spt(\partial g_{\sharp}(\llb M \rrb)) \subset A$, then $\spt(f_{\sharp}\partial (g_{\sharp} (\llb M \rrb)))\subset B$. Equivilantly,

\begin{equation*}
\left(f|_{A}\right)_{\sharp}\circ \partial = \partial \circ f_{\sharp}
\end{equation*}

\noindent which shows (3).

Basic properties of currents give (4) upon noticing that the inclusion maps satisfy (2) of Proposition \ref{prop:classical}.

The homotopy formula for currents (extended to the semi-algebraic context as in Remark \ref{rmk:homo}) and part (3) of Proposition \ref{prop:classical} give (5).

Preceding the proof of (6), we make the following claim.  For a separate proof, see Proposition 2.2.8 of \cite{RAG}. 

\begin{claim} 
\label{claim:dist}
For any semi-algebraic $A\subset \R^n$, the function $\text{dist}(x,A)$ is semi-algebraic.
\end{claim}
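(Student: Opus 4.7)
The plan is to apply the Tarski--Seidenberg theorem, which asserts that any subset of $\R^N$ defined by a first-order formula over the ordered field $\R$ whose atoms are polynomial (in)equalities together with semi-algebraic membership predicates is itself semi-algebraic. (Equivalently in its projection form: the image of a semi-algebraic set under a polynomial map, in particular a coordinate projection, is semi-algebraic. See \cite[2.2.1]{RAG}.) To prove that $\mathrm{dist}(\cdot, A)$ is semi-algebraic, it suffices by definition to show that its graph
\[
G \;:=\; \bigl\{(x,t) \in \R^n \times \R : t = \mathrm{dist}(x, A)\bigr\}
\]
is a semi-algebraic subset of $\R^{n+1}$.

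The key step is to exhibit $G$ as the set of $(x, t)$ satisfying the first-order formula
\[
\bigl(t \geq 0\bigr) \;\wedge\; \bigl(\forall a \in \R^n,\; a \in A \Rightarrow |x-a|^2 \geq t^2\bigr) \;\wedge\; \bigl(\forall \eps > 0,\; \exists a \in \R^n,\; a \in A \wedge |x-a|^2 < (t+\eps)^2\bigr).
\]
The first conjunct says $t$ is a lower bound for distances from $x$ to $A$; the second says no larger value is a lower bound; together they characterize $t$ as the infimum. Since $A$ is semi-algebraic, the predicate $a \in A$ is semi-algebraic in $a$, and all other atomic predicates are polynomial inequalities in $(x, t, a, \eps)$. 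Hence the whole formula defines a semi-algebraic set by Tarski--Seidenberg.

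An essentially equivalent approach avoiding the explicit ``infimum'' quantifier alternation is to first observe that
\[
\{(x, t, a) \in \R^n \times \R \times \R^n : a \in A,\; |x-a|^2 \leq t^2,\; t \geq 0\}
\]
is semi-algebraic, project to $\R^n \times \R$ to obtain the semi-algebraic ``sublevel'' set $\{(x,t) : \mathrm{dist}(x, A) \leq t,\; t \geq 0\}$, and then note that the graph of $\mathrm{dist}(\cdot, A)$ is the ``lower boundary'' of this set in the $t$ direction, which can again be carved out by one more quantifier over $t$.

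There is no serious obstacle; the only thing to be mindful of is that when $A$ is not closed the infimum need not be attained, which is why I state the second conjunct with an $\forall\eps\exists a$ rather than with an $\exists a$ realizing equality. Once this is noted, the conclusion is a direct invocation of Tarski--Seidenberg.
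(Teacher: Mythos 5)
Your proof is correct, and it takes a different route from the paper's. You define the graph of $\mathrm{dist}(\cdot,A)$ directly by a first-order formula (with the $\forall\eps\,\exists a$ clause correctly handling the case where the infimum is not attained) and invoke Tarski--Seidenberg in its quantifier-elimination form, i.e.\ the fact that any set described by a first-order formula whose atoms are polynomial inequalities and semi-algebraic membership conditions is semi-algebraic; this is essentially the standard textbook argument, and indeed the paper itself points to Proposition 2.2.8 of \cite{RAG} as ``a separate proof.'' The paper instead argues more geometrically: it first reduces to $A$ closed (harmless, since the distance function is unchanged), forms the set $B=\{(x,r): \exists\, y\in A,\ \|x-y\|=r\}$ of realized distances, which is semi-algebraic as a projection of a semi-algebraic set, and then identifies the graph of the distance function as the frontier of the connected component of the complement of $B$ containing the origin, using the facts that connected components and frontiers of semi-algebraic sets are again semi-algebraic. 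What each buys: the paper's argument only needs the projection (single existential quantifier) form of Tarski--Seidenberg together with basic structure theory of semi-algebraic sets, at the cost of the reduction to closed $A$ and a small topological detour; yours is shorter, handles arbitrary $A$ uniformly, and makes the logical content transparent, at the cost of invoking the full quantifier-elimination statement. Either is a complete proof of the claim.
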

\begin{proof}[Proof of Claim]
For simplicity, we may as well assume that $A$ is closed, since passing to the closure will not change the function in question. Let  $B=\{(x,r) \ | \ x\in \R^n \text{ and } \exists \ y\in A  \text{ with } ||x-y|| = r\}$. Note that $B$ is the projection of $\{(x,y,r) \ | \ x\in \R^n \ \ y\in A  \text{ and } ||x-y|| = r\}$, which is clearly semi-algebraic. 
 Next let $L$ be the connected component of the compliment of $B$ which contains $0\in \R^{n+1}$. This set is semi-algebraic, and, further, the frontier of $L$ is the graph of  $x \mapsto \text{dist}(x,A)$.

\end{proof}

 The proof of (6) then follows as in \cite[4.4.1]{GMT}  since the set:

\begin{equation*}
E=X'\cap \{x \ | \ \text{dist}(x,X'\setminus A') \leq \text{dist}(x, X'\setminus X)\}
\end{equation*}

\noindent is semi-algebraic. By repeatedly applying Proposition \ref{prop:classical}, all arguments follow exactly as stated.

For (7), note that the only non-empty semi-algebraic subset of the singleton set $\{a\}$ is itself--and hence the only allowable semi-algebraic chains are $n \llb \{a\} \rrb$ for $n\in \Z$, which implies the result.

\end{proof}

\section{Coincidence of Semi-Algebraic Homology and Integral Current Homology}
\label{sec:MT}

Our next goal is to show that the homology groups for semi-algebraic chains coincide with the homology groups of integral currents on a given semi-algebraic set $X$.

As in the introduction, let $ I_k(X)$ denote the Abelian group of $k$-dimensional integral currents, and $H^{IC}_k(X)$ denote the respective homology groups (see \cite[4.1.24]{GMT}). By comments in Section \ref{sec:elem} every semi-algebraic chain is necessarily an integral current which gives an inclusion $S_k(X)\overset{i}\hookrightarrow I_k(X)$. We will show that $i$ induces an isomorphism on the respective homology groups.

We first state a simplified version of the Deformation Theorem for integral currents:

\begin{thm}[{Deformation Theorem,  \cite[4.2.9]{GMT}}]
\label{thm:DFT}
For any integral current $T\in I_k(\mbb{R}^n)$ and $\epsilon >0$, there exists a polyhedral chain $P\in \mathcal P_k (\R^n)$, and integral currents  $Q\in  I_k(\R^n)$ and $L \in  I _{k+1}(\R^n)$ such that $T=P+Q+\partial L$, with $\mbox{\emph{spt}}(P)\cup \mbox{\emph{spt}}(L)\cup \mbox{\emph{spt}}(Q) \subset \{x \ |  \ \mbox{\emph{dist}}(x,\mbox{\emph{spt}}(T)) \leq 2n\epsilon\}$. 
Further, there exists a constant $\kappa$ so that:

\begin{enumerate}
\item $\mass(P) \leq \kappa ( \mass(T) + \epsilon \mass(\partial T) )$
\item $\mass(\partial P) \leq \kappa \mass(\partial T)$
\item $\mass(Q) \leq \epsilon \kappa \mass(\partial T)$
\item $\mass(S) \leq \epsilon \kappa \mass(T)$
\end{enumerate}

Finally, given a countable collection $\{T_j\}_{j\in \mbb N}$ of integral currents, we may set $T_j = P_j +Q_j +\partial L_j$ so that each $P_j$ is a finite sum of integer multiples of $k$ dimensional cubes from the same standard cubical decomposition of $\mathbb R^n$ with edge-length $\epsilon$.

\end{thm}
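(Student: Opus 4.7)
The plan is to adapt the classical Federer--Fleming strategy of radially projecting $T$ through the skeleta of the standard cubical decomposition of $\R^n$ with edge length $\epsilon$. Write $\mathcal{C}$ for this grid and $\mathcal{C}^j$ for its $j$-skeleton. The idea is to successively push $T$ from dimension $n$ down to dimension $k$ by choosing, in each $j$-face $C$ (starting with $j=n$), an interior point $a$ and applying the radial projection $\pi_{C,a}: C\setminus \{a\} \to \partial C$.

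First I would establish the key geometric estimate: the $k$-dimensional Jacobian of $\pi_{C,a}$ satisfies $J_k \pi_{C,a}(x) \leq c(n)\bigl(\epsilon / |x - a|\bigr)^k$ on $C \setminus \{a\}$, with an analogous inequality for the $(k-1)$-Jacobian of $\pi_{C,a}|_{\partial C}$. Integrating over a sub-cube of permissible centers $a$ (say, of edge $\epsilon/4$ centered in $C$) against the mass measure of $T\restr C$ via Fubini yields an estimate of the form $\int \mass\bigl((\pi_{C,a})_\sharp(T\restr C)\bigr)\, da \leq c(n)\,\epsilon^n\, \mass(T\restr C)$, together with the parallel bound for $\mass((\pi_{C,a})_\sharp(\partial T \restr C))$. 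By Chebyshev's inequality, outside a set of centers of arbitrarily small measure one obtains a pointwise-in-$C$ bound $\mass((\pi_{C,a})_\sharp(T\restr C)) \leq \kappa_0\, \mass(T\restr C) + \kappa_0\, \epsilon\, \mass(\partial T \restr C)$ for an absolute constant $\kappa_0 = \kappa_0(n)$.

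The cone homotopy $h_{C,a}(t,x) = a + \bigl((1-t) + t\,\tau_{C,a}(x)\bigr)(x-a)$, where $\tau_{C,a}(x) = |\pi_{C,a}(x) - a|/|x - a|$, satisfies $h_{C,a}(0,\cdot) = \mathrm{id}$ and $h_{C,a}(1,\cdot) = \pi_{C,a}$, and the current-level homotopy formula then decomposes
\[
T\restr C = (\pi_{C,a})_\sharp(T\restr C) + Q_C + \partial L_C,
\]
with the masses of $L_C$ and $Q_C$ controlled by $\epsilon$ times the Jacobian estimates above applied to $T\restr C$ and $\partial T \restr C$, respectively. Summing over $n$-cubes produces a current $T_{n-1}$ supported on $\mathcal{C}^{n-1}$, plus controlled $Q$ and $\partial L$ contained in the $\sqrt{n}\,\epsilon$-neighborhood of $\spt(T)$. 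Iterating through dimensions $n-1, n-2, \dots, k$, each time selecting a good center in the relative interior of each face, yields after $n-k \leq n$ steps a current $P$ supported on $\mathcal{C}^k$; on each closed $k$-cube, $P$ is an integer-multiplicity rectifiable current with boundary on the $(k-1)$-skeleton, so the constancy theorem identifies $P$ as an integer combination of oriented $k$-faces, i.e., a polyhedral chain.

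For the countable-family refinement, for each $T_j$ the bad set of centers in each cube has, by the Chebyshev step, measure at most $\delta_j$, which by inflating the auxiliary constant can be made less than $\epsilon^n/2^{j+2}$; the intersection over $j$ of the good-center sets then has positive measure, and any single center $a$ drawn from this intersection yields polyhedral chains $P_j$ on the common grid $\mathcal{C}$. The principal obstacle I anticipate is the careful bookkeeping of how the four mass bounds compose through the $n-k$ iterations and how the support radii accumulate to the stated $2n\epsilon$-neighborhood: the individual Jacobian estimate is a clean geometric computation, but the inductive maintenance of the exact form of the estimates across iterations---together with absorbing all the intermediate constants into a single $\kappa = \kappa(n)$---is the delicate step, and is where the final constants in (1)--(4) emerge.
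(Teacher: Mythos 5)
Your single-current construction is the standard Federer--Fleming scheme (radial projection through the skeleta with averaged choice of centers, cone homotopy, constancy theorem on the $k$-faces to identify $P$ as polyhedral), and as a proof of the main part of the statement it is essentially sound. Be aware, however, that the paper does not reprove this part: it quotes \cite[4.2.9]{GMT} verbatim and argues only the final addendum, by observing that in Federer's construction the retractions $\sigma_i$ onto the skeleta are \emph{fixed} and the only datum chosen per current is a translation $a$ of the cubical grid, almost any admissible translation being usable; a countable family is then handled by a single translation from the intersection of the admissible sets. So your proposal is a re-proof from scratch, and the place where it must be compared carefully with the paper is precisely the countable addendum.

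That is where there is a genuine gap. You shrink the bad set of centers for $T_j$ below $\epsilon^n/2^{j+2}$ ``by inflating the auxiliary constant,'' but Chebyshev only trades measure against the constant: for a fixed constant $\lambda\kappa_0$ the bad set for $T_j$ occupies a fraction of order $1/\lambda$ of the cell, uniformly in $j$, so forcing measure $\leq \epsilon^n/2^{j+2}$ forces $\lambda \sim 2^{j}$. The common center you then extract does give decompositions $T_j=P_j+Q_j+\partial L_j$ on one grid (the literal wording of the addendum), but the mass estimates for $T_j$ now carry a constant of order $\kappa\,2^{j}$ rather than the single $\kappa$ fixed earlier in the theorem. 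This is not cosmetic: in the mass-minimization proof the paper uses $\mass(\partial P_\ell)\leq \kappa\,\mass(\partial R_\ell)$ and $\mass(Q_\ell)\leq \epsilon\kappa\,\mass(\partial R_\ell)$ with one $\kappa$ for the whole sequence to get uniform mass bounds before invoking compactness, and a $2^{\ell}$-dependent constant destroys that. Moreover, no plain union bound can repair this, since for a fixed $\kappa$ each bad set is a fixed positive fraction of the cell and countably many such sets can cover everything; this is exactly why the paper instead leans on Federer's formulation, where for a given translation the per-current requirement is an admissibility (transversality with the dual skeleton) condition holding for almost every translation, so that a countable intersection costs nothing. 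You would need either to adopt that single-translation formulation, or to supply some other mechanism that preserves a $j$-independent $\kappa$. A secondary compatibility point: the paper's Proposition \ref{prop:defo} proves semi-algebraicity of $Q$ by inspecting the fixed retractions $\sigma_i$ and the shift by $a$ in Federer's proof; if you substitute your per-face-center construction, that argument would also have to be redone for your maps.
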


That we may choose the same cubical decomposition as the support for a countable collection of integral currents is crucial for generalizing mass minimization. While not explicitely stated in the theorem statement in \cite{GMT}, it follows easily from the proof, where it is shown that for any integral current $T$, almost any translation of the standard cubical decomposition may be taken as the support for $P$ in the above decomposition. 

As stated, this version of Theorem \ref{thm:DFT} is insufficient for our purposes. We need the additional proposition:

\begin{prop}
\label{prop:defo}
In Theorem \ref{thm:DFT}, if $\partial T$ is a semi-algebraic chain, then $Q$ may be chosen to be semi-algebraic as well.
\end{prop}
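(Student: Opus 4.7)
The plan is to revisit the proof of the Deformation Theorem in \cite[4.2.9]{GMT} and identify $Q$ explicitly as a pushforward by a semi-algebraic homotopy. Federer's construction proceeds by choosing a suitable translate of the standard $\epsilon$-grid on $\R^n$ and building a retraction $\psi:\R^n\setminus\Sigma\to\Sigma_k$ onto the $k$-skeleton $\Sigma_k$ of the grid, where $\Sigma$ is the finite union of centers of cubes of dimensions $k+1,\dots,n$. The retraction $\psi$ is obtained by iterating radial projections from cube centers (first in dimension $n$, then $n-1$, and so on down to $k+1$); each individual radial projection is a real-analytic semi-algebraic map on its domain, so $\psi$ is semi-algebraic, and hence so is the affine homotopy $h:[0,1]\times(\R^n\setminus\Sigma)\to\R^n$ given by $h(t,x)=(1-t)x+t\psi(x)$. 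Applying the homotopy formula of Remark \ref{rmk:homo} to $T$ (after replacing $T$ by a smoothing if needed to apply to integral currents in general, as Federer does) yields
\begin{equation*}
T=\psi_\sharp T-\partial h_\sharp(\llb[0,1]\rrb\times T)-h_\sharp(\llb[0,1]\rrb\times\partial T),
\end{equation*}
and after the standard simplifications which replace $\psi_\sharp T$ by a genuine polyhedral chain $P$ (absorbing the discrepancy into $L$), the remaining summand $Q$ is, up to sign, precisely $h_\sharp(\llb[0,1]\rrb\times\partial T)$.

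Granting this identification, the semi-algebraicity of $Q$ under the hypothesis $\partial T\in S_{k-1}(\R^n)$ is immediate. By part (3) of Proposition \ref{prop:classical}, $\llb[0,1]\rrb\times\partial T$ lies in $S_k(\R^n)$. The generic choice of translate of the cubical grid (which is already needed in Federer's proof to make the mass estimates hold for $P$) can be made so that $\spt(\partial T)\cap\Sigma=\emptyset$, so that $h$ is smooth and semi-algebraic on an open neighborhood of $[0,1]\times\spt(\partial T)$. Then the functoriality of $(\cdot)_\sharp$ on semi-algebraic chains established in Section \ref{sec:elem} gives $h_\sharp(\llb[0,1]\rrb\times\partial T)\in S_k(\R^n)$, and hence $Q\in S_k(\R^n)$.

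I expect the main obstacle to be the careful bookkeeping inside Federer's proof: the statement of Theorem \ref{thm:DFT} bundles several stages of the deformation into one, and I must verify that the component of mass $\le\epsilon\kappa\,\mass(\partial T)$ really does agree with $h_\sharp(\llb[0,1]\rrb\times\partial T)$ for the explicit semi-algebraic $h$ above, rather than some further modification whose semi-algebraicity is unclear. A secondary technical point is the ``common cubical decomposition'' clause in Theorem \ref{thm:DFT}: when the Proposition is applied to a countable family $\{T_j\}$ each with semi-algebraic boundary, we must simultaneously avoid the bad sets $\Sigma$ for every $\partial T_j$. This is handled by noting that the set of bad translates in the unit cube of translations has Lebesgue measure zero for each $j$, and a countable union of measure-zero sets is measure-zero, so a single generic translate works for all $j$ at once.
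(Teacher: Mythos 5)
Your route is essentially the paper's own: trace $Q$ through Federer's proof of Theorem \ref{thm:DFT} to homotopy pushforwards of $\llb 0,1\rrb\times\partial T$ along the cube-by-cube radial retractions, use a generic translate of the grid so that $\spt(\partial T)$ misses the bad centers (the paper uses this, plus compactness, to drop the limits and restrictions $\restr U_r$ appearing in Federer's definition of $Q$), and conclude via Proposition \ref{prop:classical} and semi-algebraic functoriality; whether $Q$ is one homotopy pushforward or a finite sum of stagewise ones $h^i_\sharp(\llb I\rrb\times\partial T)$ is immaterial to this argument.

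The one step that fails as written is the claim that $\psi$ (equivalently each stage retraction $\sigma_i$) is semi-algebraic on all of $\R^n\setminus\Sigma$. These maps are assembled cube-by-cube over the entire $\epsilon$-grid, hence are periodic and non-affine, and no such map is semi-algebraic (point preimages would have infinitely many connected components). The correct and sufficient statement, which the paper isolates explicitly, is weaker: $\spt(\partial T)$ is compact, so it meets only finitely many cubes, and on a single cube the radial retraction is semi-algebraic --- a fact the paper actually proves by exhibiting the graph of $\sigma_i$ over a cube as a finite union of projections of the explicitly semi-algebraic sets $S_\Delta$ and invoking Tarski--Seidenberg, rather than asserting it. (Also, the sup-norm radial projection is semi-algebraic but not real-analytic; only semi-algebraicity is needed.) With the restriction-to-finitely-many-cubes observation inserted, your argument coincides with the paper's.
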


The above proposition follows from analyzing the definition of $Q$ in the proof of Theorem \ref{thm:DFT} in \cite[4.2.9]{GMT}; we will follow the definitions and notations given there. There are many known proofs of the Deformation Theorem (see, for example, the work of B. White \cite{whitedefo}), however, the proof presented in \cite{GMT} is very constructive and allows for a fairly simple proof of the above proposition.

The proof of Theorem \ref{thm:DFT} in \cite{GMT} produces $Q$ as a deformation of $\partial T$, and so $Q$ will be semi-algebraic inasmuch as $\partial T$ and the deformations are semi-algebraic.

\begin{proof}
$Q$ is defined to be a sum of deformations of $\partial T$ so it suffices to show that any element of this sum is semi-algebraic.

Elements of the sum are of the form:

\begin{equation*}
\ds \lim_{r\mapsto 0^+} h_\sharp^i (\llb I \rrb \cross \partial T\restr U_r). 
\end{equation*}

\noindent By transversality arguments, we may assume that the chosen $a$ has the additional property that $\spt(\partial T)\cap (W''_{n-m-1}+a)$ is empty. Therefore, by shifting the skeletons by $a$ we may drop the $\tau_a$ terms from our analysis and,  since $\spt(\partial T)$ is compact disregard the limit in the definition and consider only the chains $h_\sharp^i (\llb I \rrb \cross \partial T)$.

By Proposition \ref{prop:classical}, it suffices to show that $h^i$ is a semi-algebraic map. This would follow immediately from showing that the retractions $\sigma_i$ are semi-algebraic--however, a weaker condition is sufficient: since any compact semi-algebraic set (in particular, $\spt(\partial T)$) may intersect only finitely many cubes of the decomposition, all we must show is that the retraction maps are semi-algebraic on a given cube.

Given such a cube $C$ with center $q$, take $\mbb{A}=\{\Delta \ | \ \Delta \text{ is an $i$-face of } C\}$ and define for any $i$-face $\Delta$:
\begin{equation*}
S_{\Delta}=\{(x,y,t)\in (C\setminus W''_{n-i-1}) \cross \Delta \cross \mbb{R} \ | \ x=q+t(y-q), t>0\}
\end{equation*}
\noindent Note that $S_\Delta$ is semi-algebraic.

Using the geometric description of $\sigma_i$ given in \cite[4.2.6]{GMT}, the graph of $\sigma_i$ over the cube $C$ is given by the finite union

\begin{equation*}
\ds\bigcup_{\Delta \in \mbb{A}} \Pi_{x,y}\left(S_{\Delta}\right)
\end{equation*}

\noindent where $\Pi_{x,y}$ denotes the projection onto the first components. Since this union is finite, the Tarski-Seidenberg principle \cite[1.4]{RAG} gives that the graph is semi-algebraic. Therefore $\sigma_i |_{K}$ is semi-algebraic on any such cube, completing the proof.

\end{proof}

The next remark will allow us to reduce to the compact case.

\begin{rmk}
\label{rmk: compact}
Given any compact set $A\subset X$, there exists a compact semi-algebraic set $B$ such that $A \subset B \subset X$. To see this, note that since $A$ is compact $A\subset B_r(0)$ for some $r\geq 0$. Further, $A\cap \Fron(X)=\phi$ and moreover, there is an $\epsilon > 0$ so that $\text{dist}(x,\Fron(X)) >\epsilon$ on $A$. Since, by Claim \ref{claim:dist}, $f(x)=\text{dist}(x,\Fron(X))$ is semi-algebraic (see the comment in the proof of (6) of Theorem \ref{thm:EilenbergSteenrod}), so too is the closed set
\begin{center}
$f^{-1}([\tfrac{\epsilon}{2},\infty])\cap X$.
\end{center}
\noindent So, the semi-algebraic set
\begin{center}
$B=\left(f^{-1}([\tfrac{\epsilon}{2},\infty])\cap X\right)\cap \overline{B_r(0)}$
\end{center}
\noindent is a compact subset of $X$ with the desired properties.

Thus, for any finite collection of currents $T_1$, $T_2$,...,$T_n$, by applying the above to $A=\bigcup \spt(T_i)$ and replacing $X$ by $B$, we may do all of our computations in $B$.
\end{rmk}

Our main result is to show the following:

\begin{thm}
\label{thm:main}
 The inclusion map $i:S_k(X,A) \rightarrow  I_k(X,A)$ induces an isomorphism on the respective homology groups $i_\ast: H^{SA}_k(X,A) \rightarrow H_k^{IC}(X,A)$.
  \end{thm}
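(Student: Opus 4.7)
The plan is to reduce Theorem \ref{thm:main} to the two claims singled out in the introduction: surjectivity of $i_\ast$ (Proposition \ref{prop:surj}, every integral cycle is homologous to a semi-algebraic cycle) and its injectivity (Proposition \ref{prop:inj}, every semi-algebraic cycle that bounds an integral chain already bounds a semi-algebraic one). By Remark \ref{rmk: compact} we may replace $X$ by a compact semi-algebraic superset of the relevant supports and hence assume $X$ is compact. The common technical engine is Theorem \ref{thm:DFT} combined with Proposition \ref{prop:defo}, which upgrades the error term $Q$ in the Deformation Theorem to a semi-algebraic chain whenever the boundary of the current being deformed is already semi-algebraic. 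To return the deformed chains to $X$ (respectively to $A$), we will use a semi-algebraic retraction $r\colon U_\delta \to X$ of a small neighborhood, chosen so that some sub-neighborhood $V_\delta$ of $A$ lies in $r^{-1}(A)$; the existence of such an $r$, in a form strong enough to push forward the auxiliary integral currents appearing below, is where the Shartser--Valette local Lipschitz contractibility of semi-algebraic sets enters.

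For surjectivity, let $T \in I_k(X)$ with $\partial T \in I_{k-1}(A)$. Since $\partial(\partial T)=0$ is trivially semi-algebraic, Theorem \ref{thm:DFT} together with Proposition \ref{prop:defo} applied to $\partial T$ yields $\partial T = P' + Q' + \partial L'$ with $P' + Q'$ semi-algebraic and $\spt(L') \subset V_\delta$. Setting $T_1 := T - L'$ makes $\partial T_1 = P' + Q'$ semi-algebraic, so a second application of Theorem \ref{thm:DFT} and Proposition \ref{prop:defo} gives $T_1 = P'' + Q'' + \partial L''$ with $P'' + Q''$ semi-algebraic and $\spt(L'') \subset U_\delta$. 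Pushing the identity $T = (P'' + Q'') + L' + \partial L''$ forward by $r$ and using $r|_X = \mathrm{id}$ so that $r_\sharp T = T$ yields
\[ T \;=\; r_\sharp(P'' + Q'') \;+\; r_\sharp(L') \;+\; \partial\, r_\sharp(L''), \]
where $r_\sharp(P''+Q'') \in S_k(X)$ by Proposition \ref{prop:classical}, $r_\sharp(L') \in I_k(A)$ because $\spt(L') \subset V_\delta \subset r^{-1}(A)$, and $r_\sharp(L'') \in I_{k+1}(X)$. Thus $T$ and the semi-algebraic cycle $r_\sharp(P''+Q'')$ represent the same class in $H_k^{IC}(X,A)$.

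For injectivity, suppose $S \in \mathcal{Z}_k(X,A)$ satisfies $S = R + \partial T$ with $R \in I_k(A)$ and $T \in I_{k+1}(X)$. Since $\partial R = \partial S$ is semi-algebraic, applying Theorem \ref{thm:DFT} and Proposition \ref{prop:defo} to $R$ gives $R = P_R + Q_R + \partial L_R$ with $P_R + Q_R$ semi-algebraic and supported in $V_\delta$. Then $T_1 := T + L_R$ has $\partial T_1 = S - (P_R + Q_R)$, which is now semi-algebraic, so a second application produces $T_1 = P_T + Q_T + \partial L_T$ with $P_T + Q_T$ semi-algebraic. Combining these yields $S = (P_R + Q_R) + \partial(P_T + Q_T)$ as currents in $U_\delta$, and applying $r_\sharp$ together with $r_\sharp S = S$ gives the semi-algebraic identity
\[ S \;=\; r_\sharp(P_R + Q_R) \;+\; \partial\, r_\sharp(P_T + Q_T), \]
with $r_\sharp(P_R + Q_R) \in S_k(A)$ and $r_\sharp(P_T + Q_T) \in S_{k+1}(X)$, so $S$ represents the zero class in $H_k^{SA}(X,A)$.

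The hardest part will be producing and using the retraction $r$. The cusp example $\{(t^2,t^3)\}$ from the introduction shows that $r$ cannot in general be taken to be Lipschitz, yet the surjectivity argument requires $r_\sharp$ to act on the integral currents $L', L''$, for which Lipschitz regularity is classically needed; the injectivity argument is milder, since by absorbing $L_R$ into $T_1$ before the second deformation it needs $r_\sharp$ only on the semi-algebraic chains $P_R+Q_R$ and $P_T+Q_T$, which by Proposition \ref{prop:classical} succeeds with a merely semi-algebraic $r$. Reconciling these two demands via the Shartser--Valette local Lipschitz contractibility, together with the freedom afforded by the Deformation Theorem to shift the cubical grid so that the supports of the error currents avoid the singular locus of $X$, is the technical heart of the argument.
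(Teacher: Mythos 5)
Your overall plan announces a reduction to Propositions \ref{prop:inj} and \ref{prop:surj}, but the argument you actually run is a direct Federer--Fleming-style deformation-plus-retraction proof, and that is where it breaks. In the surjectivity half you must apply $r_\sharp$ to $T$, $L'$ and $L''$, which are arbitrary integral currents, not semi-algebraic chains; the retraction $r$ of a neighborhood onto $X$ is only semi-algebraic continuous, not Lipschitz (the cusp $\{(t^2,t^3)\}$ shows no Lipschitz neighborhood retraction need exist), so these pushforwards are simply undefined --- the paper's $f_\sharp$ is defined only for semi-algebraic chains, and $f_\sharp^{IC}$ only for Lipschitz $f$. You flag this yourself in the last paragraph and defer it to ``reconciling'' via Shartser--Valette, but their theorem gives only \emph{local} Lipschitz deformation retractions inside small neighborhoods of points of $X$, not a global Lipschitz retraction of a neighborhood of $X$; closing this gap is exactly the content of Proposition \ref{prop:surj}, whose proof in the paper is a genuinely different and much longer argument (induction on the ambient dimension, slicing $T$ by the cube distance functions $D_{x_0}$, the local conic structure theorem, local Lipschitz semi-algebraic contractions preserving $X$, and repeated appeals to Proposition \ref{prop:inj}). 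Treating it as a technical footnote assumes away the main difficulty of the paper.

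There is a second, smaller gap: both halves of your argument require a retraction of \emph{pairs}, i.e.\ $r(V_\delta)\subset A$ for some neighborhood $V_\delta$ of $A$, so that $r_\sharp(L')\in I_k(A)$ and $r_\sharp(P_R+Q_R)\in S_k(A)$. The retraction built in the proof of Proposition \ref{prop:inj} comes from a regular neighborhood of $X$ in a triangulation compatible with $X$ alone and has no such property; constructing one compatible with $A$ as well is an extra (unverified) step. The paper avoids both issues entirely: once the absolute statements are in hand, Theorem \ref{thm:main} follows by a short formal argument --- apply Proposition \ref{prop:surj} to $\partial T$ inside $A$, then Proposition \ref{prop:inj} in $X$, then Proposition \ref{prop:surj} in $X$ (and the symmetric chain of applications, with Proposition \ref{prop:surj} used \emph{inside} $A$, for injectivity) --- with no deformation theorem, no neighborhood, and no retraction of pairs. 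I would recommend you either carry out that purely homological reduction, or accept that your route needs the full strength of Proposition \ref{prop:surj} as an input rather than as something your deformation argument can replace.
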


We will prove this result first in the special case where $A=\phi$ and then derive the general result from this special case. We first prove injectivity of the induced map. Surjectivity relies on this fact, and will be proven shortly.

\begin{prop}
\label{prop:inj}
 The inclusion map $i:S_k(X) \rightarrow  I_k(X)$ induces an injection on the respective homology groups $i_\ast: H^{SA}_k(X) \rightarrow H_k^{IC}(X)$.
  \end{prop}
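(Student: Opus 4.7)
The plan is to show that if $S\in S_k(X)$ is a semi-algebraic cycle and $S=\partial T$ for some integral current $T\in I_{k+1}(X)$, then $S$ already bounds a semi-algebraic chain in $X$; this is exactly injectivity of $i_\ast$.

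By Remark \ref{rmk: compact} I would first reduce to the compact case, replacing $X$ by a compact semi-algebraic $B\subseteq X$ containing $\spt(S)\cup \spt(T)$. Then apply the Deformation Theorem (Theorem \ref{thm:DFT}) to $T$ with parameter $\epsilon>0$, obtaining $T = P + Q + \partial L$ with $P\in\mathcal{P}_{k+1}(\R^n)$ polyhedral, $Q\in I_{k+1}(\R^n)$, $L\in I_{k+2}(\R^n)$, and each of $\spt(P)$, $\spt(Q)$, $\spt(L)$ contained in the $2n\epsilon$-neighborhood $U_\epsilon$ of $\spt(T)\subset X$. Since $\partial T = S$ is already semi-algebraic, Proposition \ref{prop:defo} allows $Q$ to be chosen semi-algebraic, and $P$, being polyhedral, is automatically semi-algebraic. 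Taking boundaries gives $\partial(P+Q)=\partial T - \partial\partial L = S$, so $R := P+Q \in S_{k+1}(U_\epsilon)$ is a semi-algebraic $(k+1)$-chain with $\partial R = S$.

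All that remains is to move $R$ from $U_\epsilon$ into $X$ without changing its boundary. For $\epsilon$ small enough, $U_\epsilon$ sits inside one of the semi-algebraically contractible neighborhoods $V$ of $X$ alluded to in the introduction, equipped with a semi-algebraic retraction $r\colon V\to X$ satisfying $r|_X = \text{id}_X$. Pushing forward by the semi-algebraic map $r$ as in Section \ref{sec:elem}, set $T':= r_\sharp R \in S_{k+1}(X)$; then
\begin{equation*}
\partial T' \ =\ r_\sharp(\partial R)\ =\ r_\sharp S\ =\ S,
\end{equation*}
the last equality holding because $\spt(S)\subset X$ and $r$ restricts to the identity on $X$. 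This exhibits $S$ as a semi-algebraic boundary in $X$, so $[S]=0$ in $H^{SA}_k(X)$.

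I expect the real obstacle to be the neighborhood-retraction step. A semi-algebraic set generically fails to be a Lipschitz neighborhood retract (the cusp $\{(t^2,t^3)\}$ from the introduction is already an obstruction), so producing the semi-algebraic $V$ and $r$ with the required regularity is the genuine semi-algebraic-geometry content of the argument; everything else is bookkeeping around the Deformation Theorem and Proposition \ref{prop:defo}. Once the retraction is in hand, pushing $R$ back into $X$ finishes the proof with essentially no further work.
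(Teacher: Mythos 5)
Your overall architecture is exactly the paper's: reduce to compact $X$, apply the Deformation Theorem together with Proposition \ref{prop:defo} so that $T = P+Q+\partial L$ with $P+Q$ a semi-algebraic chain bounding $S$ in a small neighborhood, then push $P+Q$ back into $X$ by a semi-algebraic retraction and invoke Proposition \ref{prop:functoriality} to commute $\partial$ with the pushforward. That bookkeeping is fine. The problem is the one step you defer: you assert that the ``semi-algebraically contractible neighborhoods'' from the introduction come ``equipped with a semi-algebraic retraction $r\colon V\to X$ with $r|_X=\mathrm{id}_X$.'' They do not. Contractibility of $V$ (or the Lipschitz deformation retractions of Corollary \ref{thm:retractions}, which contract a neighborhood to a \emph{point} while preserving strata) gives no map $V\to X$ restricting to the identity on $X$; those neighborhoods are what the paper uses for the surjectivity argument, not for injectivity. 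Since the existence of the retraction is the entire content of this proposition beyond the Deformation Theorem, leaving it as an assertion --- and pointing to a source that does not supply it --- is a genuine gap.

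The paper closes this gap by triangulation rather than by contractibility: enclose the compact $X$ in a cube $C$, triangulate $C$ compatibly with $X$ (so $X$ corresponds to a subcomplex $L$ of $K$ under a semi-algebraic homeomorphism $h$), pass to a barycentric subdivision so that $L$ is a full subcomplex, and take the regular neighborhood $N(L)$ with the standard retraction defined in barycentric coordinates, $r(\alpha)=\bigl(\sum_{a\in L}\alpha(a)a\bigr)/\bigl(\sum_{a\in L}\alpha(a)\bigr)$. This map is piecewise linear, hence semi-algebraic, and conjugating by $h$ yields a semi-algebraic retraction $\tilde r\colon h(N(L))\to X$; one then chooses $\epsilon$ (in your notation, so that $U_\epsilon\subset h(N(L))$) before applying the Deformation Theorem. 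Note also that, consistent with the cusp example you cite, $\tilde r$ is only continuous semi-algebraic, not Lipschitz --- which is exactly why the pushforward must be the semi-algebraic pushforward of Section \ref{sec:elem} rather than the classical Lipschitz one, and why Proposition \ref{prop:functoriality} (all terms being semi-algebraic chains) is the justification for $\tilde r_\sharp\partial(P+Q)=\partial\tilde r_\sharp(P+Q)$.
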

  
The proof has two steps--the first is to show that bounded subsets of $X$ are semi-algebraic retractions of certain arbitrarily small open neighborhoods--this is simply due to the triangulability of any compact semi-algebraic subset of $\R^n$. The second is to notice that in any open set, the Deformation Theorem and Proposition \ref{prop:defo} give that any integral current with semi-algebraic boundary is homologous to a semi-algebraic chain, and that semi-algebraic chains may be pushed forward through the above retraction.

\begin{proof}[Proof of Proposition \ref{prop:inj}]

It is clear that $i$ commutes with the boundary operator, and hence extends to a map $i_\ast$ on the associated homology groups

To prove the proposition, it suffices to show that if $S$ is a semi-algebraic chain supported on $X$ which bounds an integral current supported on $X$, then $S$ also bounds a semi-algebraic chain on $X$. To this end, let $S=\partial T$, where $T\in I_{k+1}(X)$. Applying Remark \ref{rmk: compact} to the currents $S$ and $T$, we may assume that $X$ is compact.
Enclose $X$ within the interior of a cube $C$. Triangulate the semi-algebraic set $C$ compatibly with the closed semi-algebraic subset $X$ (as in \cite{RAG}) to obtain a simplicial complex $K$ and a semi-algebraic homeomorphism $h: |K|\rightarrow C$. Since $X$ is closed, compatibility implies that $X$ is a sub-complex of the triangulation--that is, that there exists a subcomplex $L$ of $K$ such that $h|_{|L|} : |L|\rightarrow X$ is a semi-algebraic homeomorphism. Via (repeated) barycentric subdivision (up to relabeling) we may assume that $L$ is a full subcomplex of $K$, and hence that there exists an open neighborhood (the so-called regular neighborhood) $N(L)\subset |K|$ of $|L|$ on which when any $\alpha \in N(L)$ is written in barycentric coordinates as a (non-negative) linear combination of vertices from $K$:
\begin{equation*}
\alpha = \ds \sum_{a \text{ is a vertex of } K} \alpha(a) a
\end{equation*}  

\noindent we have that

\begin{equation*}
\ds \sum_{a \text{ is a vertex of } L}  \alpha(a) > 0.
\end{equation*}

\noindent This enables us to define a retraction map $r: N(L)\rightarrow X$ given as follows:

\begin{equation*}
r(\alpha)=\dfrac{\ds \sum_{a \text{ is a vertex of } L}  \alpha(a)a}{\ds \sum_{a \text{ is a vertex of } L}  \alpha(a)}.
\end{equation*}

\noindent For more details, see \cite[II, $\S$ 9]{FAT}.  Since $|L|$ is semi-algebraic and $r$ is piecewise linear, we obtain that $r$ is semi-algebraic on $|L|$, and hence by composing with the semi-algebraic triangulating homeomorphism, there is a semi-algebraic retraction $\td{r}:h(N(L))\rightarrow X$. Since $h(N(L))$ is open in $C$, by restricting to $E=h(N(L))\cap \mathring{C}$ we may assume that $X$ is a Euclidean neighborhood retract in $\mbb{R}^n$. Further, since $X$ is compactly contained in $E$, $E$ contains some $\delta$ neighborhood of $X$, and so for $\epsilon >0$ small enough, we may apply Theorem \ref{thm:DFT} and Proposition \ref{prop:defo} to get a polyhedral $P$,  and integral currents $Q$ and $R$ in $S_{k+1}(E)$ and $I_{k+2}(E)$, respectively, such that $T=P+Q+\partial R$. Applying the semi-algebraic retraction to both sides of $\partial T = \partial P +\partial Q=\partial(P+Q)$ gives $S=\partial T=\td r_{\sharp} (\partial T)=\td r_{\sharp} (\partial(P+Q))=\partial \td r_{\sharp}(P+Q)$, where the last equality is justified by Proposition \ref{prop:functoriality} since all terms involved are semi-algebraic chains. Thus $S$ bounds the semi-algebraic chain $\td{r}_{\sharp}(P+Q)$, which proves the injectivity of $i_\ast$.

\end{proof}

Surjectivity requires another property of semi-algebraic sets which is a corollary of Theorem 5.1 of \cite{deRham}. A simplified version is given below, although it should be noted that the statement in \cite{deRham} is more general and stronger. 

\begin{thm}{\cite[Theorem 5.1]{deRham}}
Let $X_1,...,X_k$ be a stratification for $X=\cup X_i\subset \R^n$, and let $p\in X$. 
\begin{enumerate}
\item There exists a neighborhood $U$ of $p$ in $X$ and a stratification $U_1,...,U_\ell$ of $U$ so that each $X_i \cap U$ is a union of some subcollection of $U_1,...,U_\ell$.
\item There exists a semi-algebraic $N\subset U$ with $p\in N$ and  $\dim(N) < \dim(U)$ and a Lipschitz strong deformation retraction $r: U \times [0,1] \rightarrow U$ to $N$ such that:
\begin{enumerate}
\item $r(x,0) \in N$ and $r(x,1) = x$ for $x\in X$.
\item For any $j$, $r(U_j)\cross(0,1])\subset U_j$.
\end{enumerate}
\end{enumerate}
\end{thm}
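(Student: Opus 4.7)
The plan is to handle (1) and (2) separately, reducing (2) to a Lipschitz cone structure for $X$ at $p$.

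For (1), I would choose $\epsilon > 0$ small enough that $B_\epsilon(p)$ meets only those $X_i$ whose closure contains $p$, and then apply the semi-algebraic stratification theorem of the kind invoked after \cite{HardtHomotopy} in Section~\ref{sec:elem} to $X \cap B_\epsilon(p)$ using the family $\{X_i \cap B_\epsilon(p)\}$ as a compatibility datum. This produces a stratification $U_1, \dots, U_\ell$ of $U := X \cap B_\epsilon(p)$ such that each $X_i \cap U$ is a union of strata and the usual frontier condition holds. No further cleverness is needed.

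For (2), the strategy is to exhibit a \emph{Lipschitz local conic structure}: a semi-algebraic bi-Lipschitz homeomorphism from $U$ (further shrunk if needed) onto the cone on the link $L := U \cap S_\rho(p)$ for some $\rho \in (0,\epsilon)$ that sends $p$ to the vertex and each $U_j \setminus \{p\}$ onto the open cone on the corresponding stratum of $L$. A generic $\rho$ works, because by a semi-algebraic Sard argument the critical-value set of $x \mapsto \|x-p\|$ restricted to each $U_j$ is a semi-algebraic subset of positive codimension in $(0,\epsilon)$, so the sphere $S_\rho(p)$ is transverse to every $U_j$ for almost every $\rho$. Setting $N = \{p\}$, I would take $r$ to be the pullback of the straight-line contraction on the cone, which is patently Lipschitz there; condition $(b)$ is then automatic from the stratified form of the cone. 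When $\dim(U)=0$ the point $p$ is isolated and the statement is vacuous with $U=\{p\}$.

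The technical heart is showing that this homeomorphism can be made Lipschitz in \emph{both} directions while preserving strata. The naive radial retraction inside $U$ fails: $U$ need not be star-shaped at $p$, and nearest-point projection onto radial rays of $X$ can blow up Lipschitz constants near cusps of lower-dimensional strata, which is exactly the pathology the cusp example in the introduction illustrates. To overcome this I would invoke one of the two standard machines of semi-algebraic Lipschitz geometry: Mostowski's Lipschitz stratifications, or the Kurdyka--Parusi\'nski pancake (L-regular) decomposition. Either produces a stratum-preserving bi-Lipschitz trivialization of the radial foliation on a punctured neighborhood of $p$, and once such a trivialization is in hand the desired $r$ is obtained by transporting the linear homotopy $(q,t) \mapsto tq$ on $L \times [0,1]$ back to $U$. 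All of the genuine difficulty lies in this Lipschitz trivialization; the rest of the proof is formal bookkeeping to ensure that the various strata and the distinguished point $p$ are respected at every step.
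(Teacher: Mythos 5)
You should first note that the paper does not prove this statement at all: it is quoted verbatim (in simplified form) from \cite[Theorem 5.1]{deRham}, so the only meaningful comparison is with Shartser--Valette's argument. Your part (1) is unproblematic --- a stratification of a small semi-algebraic neighborhood compatible with the finitely many sets $X_i\cap U$ is standard. The gap is in part (2), and it is not a deferral of technicalities but a step that is false as stated.

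Your plan hinges on a ``Lipschitz local conic structure'': a semi-algebraic \emph{bi-Lipschitz} homeomorphism of $U$ onto the straight cone over the link $U\cap S_\rho(p)$, carrying strata to cones on link strata. That is precisely the assertion that semi-algebraic germs are metrically conical, and it fails in general. The $\beta$-horn $\{x^2+y^2=z^{2\beta},\ z\ge 0\}$ with integer $\beta\ge 2$ is semi-algebraic and topologically a cone on a circle, but by Birbrair's bi-Lipschitz classification of semi-algebraic surface germs it is not bi-Lipschitz equivalent (outer or inner --- and outer bi-Lipschitz implies inner) to the straight cone over its link. Genericity of $\rho$ and transversality of the sphere to the strata only yield a homeomorphism; that is exactly Theorem \ref{thm:LCS} of the paper, which is emphatically not Lipschitz. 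Nor do the machines you invoke rescue the step: Mostowski's Lipschitz stratifications give Lipschitz trivializations \emph{along} strata (vacuous at the point stratum $\{p\}$ in the radial direction), and the Kurdyka--Parusi\'nski $L$-regular (pancake) decomposition gives pieces on which inner and outer metrics are comparable, not a bi-Lipschitz trivialization of the radial foliation. Note also that the theorem deliberately asks only for a Lipschitz strong deformation retraction onto \emph{some} semi-algebraic $N$ with $\dim N<\dim U$; retraction to the point is the separate Corollary \ref{thm:retractions}, obtained by iterating the theorem by induction on $\dim N$. Shartser and Valette build the retraction cell-by-cell from an $L$-regular decomposition, collapsing one coordinate direction at a time with controlled Lipschitz constants --- e.g.\ the $2$-horn contracts to the origin by $r((x,y,z),t)=(t^2x,t^2y,tz)$, a Lipschitz contraction bearing no relation to the straight cone. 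Your choice $N=\{p\}$ forced through a conic structure is exactly where the argument breaks, so the ``formal bookkeeping'' you describe rests on a foundation that does not exist.
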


\noindent We can then prove the following corollary by inducting on the dimension of the set $N$ obtained in the previous theorem. For a more elementary proof, see Theorem 4.1.5 of \cite{Shartserthesis}.

\begin{corr}
\label{thm:retractions}
Let $X_1,\dots,X_m$ be semi-algebraic subsets of $\R^m$ such that $X=\cup_{j=1}^m X_j \subset \mbb{R}^n$ is closed. Further, suppose that $0\in \overline{X_j}\cap X$ for all $j$. Then there exists a neighborhood $U$ of $0$ in $\mbb{R}^n$ and a semi-algebraic Lipschitz deformation retraction to $0$, $r:U\times I\rightarrow U $ that preserves the $X_j$'s.
\end{corr}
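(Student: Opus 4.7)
The plan is to iterate Theorem 5.1 by induction on $\dim Y$, where $Y$ is an ambient semi-algebraic set containing $0$. Specifically, I would prove the following strengthening of the statement: for any locally closed semi-algebraic $Y \subset \R^n$ with $0 \in Y$ and any finite collection of semi-algebraic subsets $Y_1, \dots, Y_m$ of $Y$ with $0 \in \overline{Y_j}$ for each $j$, there is an open neighborhood $V$ of $0$ in $Y$ and a Lipschitz semi-algebraic strong deformation retraction $r \colon V \times [0,1] \to V$ to $\{0\}$ satisfying $r\bigl((Y_j \cap V) \times [0,1)\bigr) \subset Y_j$ for each $j$ (so that the homotopy respects each $Y_j$ before the final collapse). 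The corollary follows by taking $Y = \R^n$, or equivalently a large closed ball around $0$. The base case $\dim Y = 0$ is immediate: in a small enough neighborhood $V$ equals $\{0\}$ and the constant homotopy works.

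For the inductive step I would apply Theorem 5.1 to $Y$ with an input stratification compatible with each $Y_j$. After reversing time so that $r_1(\cdot,0)=\mathrm{id}$ and $r_1(\cdot,1)\in N$, this yields a neighborhood $U$ of $0$ in $Y$, a stratification $\{U_i\}$ of $U$, a lower-dimensional semi-algebraic $N \subset U$ containing $0$, and a Lipschitz semi-algebraic strong deformation retraction $r_1 \colon U \times [0,1] \to U$ onto $N$ preserving each stratum for $t<1$. Because each $Y_j \cap U$ is a union of strata, $r_1$ preserves each $Y_j$ in the stated sense. Dropping any $Y_j$ with $0 \notin \overline{Y_j \cap N}$ (preservation is vacuous on a small enough neighborhood of $0$), the inductive hypothesis applied to $N$ with the collection $\{Y_j \cap N\}_j$ produces a Lipschitz semi-algebraic strong deformation retraction $r_2 \colon V_N \times [0,1] \to V_N$ of a neighborhood $V_N$ of $0$ in $N$ onto $\{0\}$ preserving each $Y_j \cap N$. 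I would then concatenate,
\begin{equation*}
  r(x,t) \;=\;
  \begin{cases}
    r_1(x,\,2t), & t\in[0,\tfrac{1}{2}],\\
    r_2\bigl(r_1(x,1),\,2t-1\bigr), & t\in[\tfrac{1}{2},1],
  \end{cases}
\end{equation*}
defined on $W = r_1(\cdot,1)^{-1}(V_N) \subset U$, which is open in $Y$ by continuity of $r_1(\cdot,1)$ and contains $0$. Then $r$ is continuous (the pieces agree at $t=\tfrac{1}{2}$), semi-algebraic (its graph is a finite union of graphs of semi-algebraic maps), Lipschitz (each piece is Lipschitz after the linear reparametrization in $t$), and satisfies $r(x,0)=x$, $r(x,1)=0$, $r(0,t)=0$; preservation of each $Y_j$ follows directly from preservation by $r_1$ and $r_2$.

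The main obstacle is arranging invariance of the neighborhood: we need a $V \subset W$ with $r(V \times [0,1]) \subset V$ (rather than merely $\subset U$), so that $r$ is an honest self-map of its domain. Since $r(\{0\}\times[0,1])=\{0\}$ and $r$ is continuous on the compact factor $[0,1]$, a standard tube-lemma argument produces an open $V \subset W$ with $r(V \times [0,1]) \subset W$, and by further shrinking (or by replacing $V$ with a semi-algebraic cone-like neighborhood of $0$, which exists by the local conic structure of semi-algebraic sets) one arranges $r(V \times [0,1]) \subset V$. A secondary point is that Theorem 5.1 is stated with the opposite time convention, handled by the reparametrization $t\mapsto 1-t$; and local closedness at $0$ is preserved under the induction since $N \subset U$ with $U$ open in $Y$ is locally closed at $0$.
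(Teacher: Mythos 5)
Your approach is the one the paper itself indicates (its entire proof of the corollary is the remark that one inducts on the dimension of the retract $N$ furnished by the quoted Theorem 5.1 of \cite{deRham}), and the skeleton is right: reverse time, concatenate the stratum-preserving retraction onto $N$ with an inductively obtained contraction of a neighborhood of $0$ in $N$, and check semi-algebraicity and Lipschitzness of the concatenation. However, your strengthened inductive statement claims more than the concatenation delivers, and the closing sentence ``preservation of each $Y_j$ follows directly from preservation by $r_1$ and $r_2$'' has a genuine gap when some $Y_j$ is not closed. At the handoff time $t=\tfrac12$ the trajectory of a point $x\in Y_j$ is $r_1(x,1)\in N$, which the quoted theorem only places in $\overline{Y_j}$ (the stratum through $x$ is preserved only for $t<1$); if $r_1(x,1)\notin Y_j$, nothing in the second half returns the point to $Y_j$, since $r_2$ is only required to preserve $Y_j\cap N$. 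Your ``dropping'' move does not rescue this: if $0\notin\overline{Y_j\cap N}$ while $Y_j$ still clusters at $0$, preservation of $Y_j$ by the concatenated homotopy is not vacuous --- it simply fails on $[\tfrac12,1)$. So as written you have established preservation of a general semi-algebraic $Y_j$ only on $[0,\tfrac12)$.

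The repair is to run the induction for sets that are (relatively) closed, or equivalently to settle for preservation of the closures $\overline{X_j}$. If $Y_j$ is closed, continuity gives $r_1(x,1)\in\overline{Y_j}\cap N=Y_j\cap N$, and since $r_1(0,\cdot)\equiv 0$, the landing points of a sequence $Y_j\ni x_k\to 0$ show $0\in\overline{Y_j\cap N}$ automatically, so no dropping is ever needed and the handoff is clean. This closed-set version is all the paper uses (Proposition \ref{prop:surj} invokes the corollary only for the single compact set $X$), and it matches the natural reading of ``preserves'' given that $0$ is only assumed to lie in $\overline{X_j}$. A smaller caveat: merely shrinking the neighborhood does not produce the invariance $r(V\times[0,1])\subset V$; you either need the conic description of the contraction (under which ball-like neighborhoods are invariant) or should observe that the later application only requires the retraction $f_i:C(x_i,y_i)\times I\rightarrow U(x_i,y_i)$ to land in a possibly larger neighborhood, so strict self-mapping is not actually needed.
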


To assist in the proof, we make the following remark.

\begin{rmk}
\label{rmk:sblip}
If $h: X \rightarrow Y$ is semi-algebraic and a homeomorphism, then its inverse is also semi-algebraic, since the graph of $h$, $\{(x,y)\in X\cross Y \ | \ y=h(x)\}=\{(x,y)\in X\cross Y \ | \ h^{-1}(y)=x\}$ is a coordinate permutation of the graph of $h^{-1}$, and thus the graph of $h$ is semi-algebraic if and only if the graph of $h^{-1}$ is. For brevity, we call such a map a semi-algebraic homeomorphism. If we further suppose that $h$ is bilipschitz, then the following diagram commutes:

\begin{center}
\begin{tikzpicture}[description/.style={fill=white,inner sep=2pt}]
    \matrix (m) [matrix of math nodes, row sep=3em,
    column sep=2.5em, text height=1.5ex, text depth=0.25ex]
    { H^{SA}_\ast(X)  & H^{IC}_\ast(X)\\
H^{SA}_{\ast}(Y) &  H^{IC}_{\ast}(Y) \\ };
       \path[->,font=\scriptsize]
    (m-1-1)  edge node[auto] {$ i_\ast $} (m-1-2)
    (m-2-1)  edge node[auto] {$ i_\sharp $} (m-2-2);

       \path[<->,font=\scriptsize]
     (m-1-1)       edge node[auto] {$ h_\ast $} (m-2-1)
	(m-1-2) edge node[auto] {$ h^{IC}_\sharp $} (m-2-2);
\end{tikzpicture}
    \end{center}
\noindent The vertical arrows are isomorphisms since $f$ is invertible in both the Lipschitz and semi-algebraic categories, so the bijectivity of the inclusion induced map $i_\ast: H^{SA}_k(X) \rightarrow H_k^{IC}(X)$ is invariant under such transformations.

\end{rmk}

\begin{prop}
\label{prop:surj}
 If $X\subset \mbb{R}^n$ is semi-algebraic then the inclusion map $i:S_k(X) \hookrightarrow  I_k(X)$ induces a surjection on the respective homology groups $i_\ast: H^{SA}_k(X) \rightarrow H_k^{IC}(X)$.
  \end{prop}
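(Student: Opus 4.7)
The plan is to combine the Deformation Theorem with the local Lipschitz contractibility of $X$ provided by Corollary \ref{thm:retractions}. By Remark \ref{rmk: compact} I first reduce to the case of compact $X$. Given a cycle $T \in I_k(X)$, I need to produce a semi-algebraic cycle $S \in S_k(X)$ together with an integral chain $R \in I_{k+1}(X)$ so that $T - S = \partial R$.

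As in the proof of Proposition \ref{prop:inj}, I enclose $X$ in a compact semi-algebraic neighborhood $E \subset \R^n$ with a semi-algebraic retraction $\tilde r : E \to X$. Applying Theorem \ref{thm:DFT} in $E$ with a parameter $\epsilon$, and using that $\partial T = 0$ forces $Q = 0$ via mass estimate (3), I obtain $T = P + \partial L$ with $P$ polyhedral (hence semi-algebraic) supported in a $2n\epsilon$-neighborhood of $X$, and $L \in I_{k+1}(E)$. The candidate semi-algebraic cycle is then $S := \tilde r_\sharp P \in S_k(X)$, well-defined by Proposition \ref{prop:classical}.

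To finish, I must exhibit an integral chain $R \in I_{k+1}(X)$ with $\partial R = T - S$. The naive approach of applying a semi-algebraic homotopy from $\text{id}_E$ to $\tilde r$ to the integral chain $L$ fails for two reasons: $\tilde r$ is semi-algebraic but generally not Lipschitz, so $\tilde r_\sharp L$ is not defined as an integral push-forward, and the homotopy chain would a priori lie in $E$ rather than in $X$. This is why the paper invokes Corollary \ref{thm:retractions}: by compactness, cover $X$ by finitely many open sets $U_1, \ldots, U_N \subset \R^n$, each carrying a semi-algebraic Lipschitz deformation retraction $h_j : U_j \times I \to U_j$ to a point $p_j \in X$ that preserves the strata of $X$. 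Shrink $\epsilon$ using a Lebesgue number argument so that each cube of $P$ lies in some $U_j$. Then use the local Lipschitz homotopies $h_j$, together with Proposition \ref{prop:classical} and the homotopy formula of Remark \ref{rmk:homo}, to construct integral cylinder chains in $U_j \cap X$ realizing the homology between each cube of $P$ and its $\tilde r$-image in $X$. Summing these local cylinders with $L$ yields the desired $R \in I_{k+1}(X)$.

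The main obstacle is this last local-to-global construction. The local retractions target different points $p_j$, so they do not combine into a global Lipschitz retraction of a neighborhood of $X$ onto $X$—indeed, the introduction notes that no such global retraction exists. Ensuring that each cylinder chain is supported in $X$ rather than merely in $E$ relies crucially on the strata-preserving property of Corollary \ref{thm:retractions}, which keeps fragments of $X$ inside $X$ along the local Lipschitz homotopies while pulling nearby cubes of $P$ into $X$ in a controlled way. Compatibility on overlaps $U_j \cap U_k$ must be handled by an iterative gluing, correcting one cube of $P$ at a time with the homotopy from its chosen $U_j$ so that any mismatch on overlaps is absorbed into the integral chain $R$ at each step.
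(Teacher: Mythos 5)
There is a genuine gap, and it sits exactly where you place ``the main obstacle'': the construction of $R\in I_{k+1}(X)$ with $\partial R = T-S$ is not carried out, and the tools you invoke cannot carry it out. Two distinct problems arise. First, the chain $L$ produced by Theorem \ref{thm:DFT} is supported in $E$, not in $X$, and it is an arbitrary integral current, so it can neither be pushed into $X$ by $\tilde r$ (which is semi-algebraic but not Lipschitz, so $\tilde r_\sharp L$ is undefined) nor left alone (then $R$ is supported in $E$ and you have only shown that $T$ and $S$ are homologous \emph{in $E$}, which is not the statement of the proposition). Since, as the introduction notes, a semi-algebraic set need not be a Lipschitz retract of \emph{any} neighborhood, there is no way to trade $E$ for $X$ at this stage. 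Second, Corollary \ref{thm:retractions} gives a Lipschitz semi-algebraic deformation retraction of a Euclidean neighborhood $U_j$ onto a \emph{point} $p_j$ which \emph{preserves} $X$, i.e.\ keeps points of $X$ inside $X$; it gives no control whatsoever over points of $U_j\setminus X$. A cube of $P$ lies only near $X$, not in $X$, so the cylinder chain over it under $h_j$ is supported in $U_j$, not in $U_j\cap X$; the claim that the strata-preserving property ``pulls nearby cubes of $P$ into $X$ in a controlled way'' is not something the corollary provides. Finally, the overlap problem you defer to ``iterative gluing, correcting one cube at a time'' is precisely the hard part: the mismatch terms created on shared faces must themselves be filled \emph{within $X$}, and nothing in your setup produces such fillings.

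The paper avoids all of this by never deforming $T$ off of $X$. It argues by induction on the ambient dimension (first for $\dim(X)\leq n-1$, then embedding $X\times\{0\}\subset\R^{n+1}$ for the general case): it covers the compact $X$ by finitely many cubes $C(x_i,y_i)$ chosen so that the slices $\langle T, D_{x_i}, y_i\rangle$ exist (hence are supported in $\partial C(x_i,y_i)\cap X$, which stays inside $X$ by Proposition \ref{prop:slice}), so that $\dim(\partial C(x_i,y_i)\cap X)\leq n-2$, and so that $X\cap C(x_i,y_i)$ admits a Lipschitz semi-algebraic contraction preserving $X$ (Corollary \ref{thm:retractions} applied at points \emph{of} $X$, acting on pieces of $T$ that are already supported in $X$). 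The inductive hypothesis is applied to the slice, viewed inside $\partial C(x_i,y_i)$ minus a small ball, which is bilipschitz semi-algebraically equivalent to a subset of $\R^{n-1}$ (Remark \ref{rmk:sblip}); the contraction plus the homotopy formula cap the piece of $T$ inside the cube, and Proposition \ref{prop:inj} is used to restart the process on the union of the remaining cubes, with all corrections supported in $X$ by construction. The Deformation Theorem is used only in the injectivity argument, where Proposition \ref{prop:defo} guarantees that everything that must be pushed forward by the non-Lipschitz retraction is semi-algebraic. If you want to salvage your outline, you would need a mechanism that keeps every auxiliary chain inside $X$ from the start; the slicing-plus-induction scheme of the paper is exactly such a mechanism.
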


The proof will require basic knowledge of slicing of currents; the aspects that we require are documented below. The main reference for the theory of slicing is \cite{FST}. In particular, the following Proposition comprises a part of \cite{FST}, Corollary 3.6.

\begin{prop}\label{prop:slice}
If $f : \R^n \rightarrow \R$ is Lipschitz and $T\in I_k(\R^n)$ has $\partial T=0$. Then, for almost every $y\in \R$, there is a current $<T,f,y> \in I_{k-1}(\R^n)$ so that:

\begin{enumerate}
\item $\mbox{\emph{spt}}(<T,f,y>) \subseteq \mbox{\emph{spt}}(T) \cap f^{-1}(y)$
\item $<T,f,y> = \partial(T\restr \{f>y\} )$
\end{enumerate}

\end{prop}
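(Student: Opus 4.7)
The plan is to construct the slice by the direct formula
\begin{equation*}
\langle T, f, y\rangle \; := \; \partial(T\restr\{f>y\}),
\end{equation*}
and then verify properties (1) and (2) for almost every $y$. Property (2) will then hold by the very definition, and the substance lies in showing that this boundary is, for a.e. $y$, an integral current whose support is contained in $\spt(T) \cap f^{-1}(y)$.

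First I would observe that, since $T$ is an integral current and $\{f>y\}$ is Borel for every $y$, the restriction $T\restr\{f>y\}$ is a well-defined current of mass at most $\mass(T)$, and it remains integer-multiplicity rectifiable because restriction to a Borel set preserves rectifiability and integer multiplicity. The function $y\mapsto \mass(T\restr\{f>y\})$ is monotone decreasing, hence locally of bounded variation. The main technical input from the general slicing apparatus (the co-area style inequality in \cite[4.2.1]{GMT} or equivalently \cite[Thm.~3.2]{FST}) then yields
\begin{equation*}
\int_{-\infty}^{\infty} \mass\bigl(\partial(T\restr\{f>y\})\bigr)\,dy \;\leq\; \lip(f)\cdot \mass(T)\;+\;\int_{-\infty}^{\infty} \mass\bigl((\partial T)\restr\{f>y\}\bigr)\,dy.
\end{equation*}
Because $\partial T = 0$, the second term vanishes, and so $\mass(\partial(T\restr\{f>y\})) < \infty$ for almost every $y$. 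Consequently, for such $y$ the current $T\restr\{f>y\}$ is normal and integer-multiplicity rectifiable, hence lies in $I_k(\R^n)$, and therefore $\langle T,f,y\rangle \in I_{k-1}(\R^n)$, which is the integrality assertion of (2).

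Finally I would establish the support inclusion (1). The universal inclusion $\spt(\partial S)\subseteq \spt(S)$ gives at the outset $\spt\langle T,f,y\rangle \subseteq \spt(T\restr\{f>y\}) \subseteq \spt(T)\cap \overline{\{f\geq y\}}$. To sharpen this to $\spt(T)\cap f^{-1}(y)$, I would show that $\langle T,f,y\rangle$ vanishes on the open set $\{f>y\}$. Indeed, for any smooth compactly supported $(k-1)$-form $\omega$ with $\spt(\omega)\subset \{f>y\}$, the form $d\omega$ is also supported in $\{f>y\}$, and so $T\restr\{f>y\}$ agrees with $T$ on $d\omega$, yielding
\begin{equation*}
\langle T,f,y\rangle(\omega) = (T\restr\{f>y\})(d\omega) = T(d\omega) = (\partial T)(\omega) = 0.
\end{equation*}
Hence $\spt\langle T,f,y\rangle$ is disjoint from the open set $\{f>y\}$, and combining with the earlier inclusion we obtain $\spt\langle T,f,y\rangle \subseteq \spt(T)\cap f^{-1}(y)$, which is (1).

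The main obstacle is the co-area-type inequality above: this is the genuine technical heart of slicing theory, and is what makes the phrase ``for almost every $y$'' work. Everything else is a routine manipulation using $\partial T=0$ and basic properties of the restriction operator, and is precisely what is extracted from \cite[Cor.~3.6]{FST}.
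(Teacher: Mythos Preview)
The paper does not supply its own proof of this proposition: it is stated purely as a quotation of standard slicing theory, with the sentence ``the following Proposition comprises a part of \cite{FST}, Corollary 3.6'' serving as the entire justification. Your proposal therefore goes further than the paper does, and what you have written is essentially the standard argument behind that cited corollary: define the slice by $\langle T,f,y\rangle=\partial(T\restr\{f>y\})$, use the coarea-type mass inequality to get finite boundary mass for a.e.\ $y$, and read off the support inclusion from $\partial T=0$. Your support argument is correct as written, since the first inclusion lands you in $\spt(T)\cap\{f\ge y\}$ and the vanishing on $\{f>y\}$ then cuts this down to $\spt(T)\cap f^{-1}(y)$.

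One point worth making explicit: the implication ``$T\restr\{f>y\}$ is integer-multiplicity rectifiable and normal, hence lies in $I_k(\R^n)$'' is precisely the boundary rectifiability theorem (\cite[4.2.16]{GMT}); without it one would not know that $\partial(T\restr\{f>y\})$ is itself rectifiable rather than merely a flat chain of finite mass. You use this correctly but tacitly, and since the paper is content to cite \cite{FST} wholesale, this is entirely in keeping with the level of detail expected here.
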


\noindent The current $<T,f,y>$ is referred to as the \emph{slice of $T$ in $f^{-1}(y)$}.

We will also require the following consequence of Hardt's trivialization theorem--for details, see \cite[\S 9.3]{RAG}. 

\begin{thm}[{Local Conic Structure,  \cite[Theorem 9.3.6]{RAG}}]
\label{thm:LCS}
Let $A\subset \R^n$ be semi-algebraic and $x$ a nonisolated point of $E$. There exists $\epsilon > 0$ and a semi-algebraic homeomorphism $\varphi: \overline{B_\epsilon (x)} \rightarrow \overline{B_\epsilon (x)}$ so that:
\begin{enumerate}
\item $||\varphi(y) - x || = ||y-x||$ for all $y\in\overline B_\epsilon(x)$,
\item $\varphi|_{\partial B_\epsilon(x)}$ is the identity mapping.
\item $\varphi^{-1} ( A \cap \overline B_\epsilon (x) ) $ is the cone with vertex $x$ and basis $A \cap \partial B_\epsilon(x)$. 
\end{enumerate}
\end{thm}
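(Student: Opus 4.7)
The plan is to deduce this from Hardt's semi-algebraic triviality theorem, applied to the radial distance function. Without loss of generality we translate so that $x=0$, so $f(y)=\|y\|$ is a semi-algebraic function. Pick any large $R>0$ and consider the restriction $f: \overline{B_R(0)} \to [0,R]$. Hardt's theorem (see \cite[Theorem 9.3.2]{RAG}) provides a finite semi-algebraic partition $\{J_\alpha\}$ of $[0,R]$, compatible with the image of $A$ under $f$, together with a semi-algebraic trivialization of $f$ over each $J_\alpha$ that is also compatible with the set $A$. In particular, there is some piece of the form $(0,\rho)$ (or $(0,\rho]$) adjacent to the piece $\{0\}$. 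Fix any $\epsilon$ strictly less than $\rho$; then over $(0,\epsilon]$ the map $f$ is semi-algebraically trivial.

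Concretely, by taking the model fiber to be $F := f^{-1}(\epsilon) = \partial B_\epsilon(0)$ and composing the trivialization given by Hardt's theorem with a fiberwise semi-algebraic map if necessary, I would arrange a semi-algebraic homeomorphism
\[
h : f^{-1}((0,\epsilon])\longrightarrow (0,\epsilon]\times \partial B_\epsilon(0)
\]
satisfying: (i) $\pi_1\circ h = f$; (ii) $h(A\cap f^{-1}((0,\epsilon]))=(0,\epsilon]\times(A\cap\partial B_\epsilon(0))$; and (iii) $h(y)=(\epsilon,y)$ for $y\in\partial B_\epsilon(0)$. Property (iii) — making the trivialization reduce to the identity at the top fiber — is the one minor technical point: it is achieved by precomposing the abstract model of Hardt's theorem with the inverse of its restriction to $\partial B_\epsilon(0)$, which is itself a semi-algebraic homeomorphism onto $F$ and therefore semi-algebraic (Remark \ref{rmk:sblip}).

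Next I introduce the standard semi-algebraic ``cone'' trivialization
\[
g:\overline{B_\epsilon(0)}\setminus\{0\}\longrightarrow (0,\epsilon]\times \partial B_\epsilon(0),\qquad g(y) = \left(\|y\|,\ \frac{\epsilon\, y}{\|y\|}\right),
\]
which is a semi-algebraic homeomorphism with $\pi_1\circ g = f$ and $g(y)=(\epsilon,y)$ on $\partial B_\epsilon(0)$. Define
\[
\varphi := h^{-1}\circ g\ \text{on }\overline{B_\epsilon(0)}\setminus\{0\},\qquad \varphi(0):=0.
\]
Then (1) is immediate since $\|\varphi(y)\| = f(\varphi(y)) = \pi_1(h(\varphi(y))) = \pi_1(g(y)) = \|y\|$, (2) follows from $h$ and $g$ both being the identity on the top fiber, and (3) follows from property (ii): the preimage of $A\cap\overline{B_\epsilon(0)}$ under $\varphi$ is $g^{-1}((0,\epsilon]\times(A\cap\partial B_\epsilon(0)))$, which is exactly the open cone on $A\cap\partial B_\epsilon(0)$ with vertex $0$, together with $\{0\}$ itself whenever $0\in A$.

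The main obstacle, beyond invoking Hardt's theorem, is verifying that $\varphi$ extends continuously and remains a homeomorphism at the cone point $0$. Continuity is automatic from property (1), since $\|\varphi(y)\|=\|y\|\to 0$ as $y\to 0$; the same argument applied to $\varphi^{-1}= g^{-1}\circ h$ shows continuity of the inverse. Semi-algebraicity of $\varphi$ on all of $\overline{B_\epsilon(0)}$ follows because its graph is the union of the (semi-algebraic) graph on $\overline{B_\epsilon(0)}\setminus\{0\}$ and the single point $(0,0)$, so the Tarski--Seidenberg principle \cite[1.4]{RAG} gives the conclusion.
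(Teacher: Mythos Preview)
Your proof is correct and is exactly the standard derivation of the local conic structure from Hardt's semi-algebraic triviality theorem; the paper does not supply its own proof but simply cites \cite[Theorem 9.3.6]{RAG} and remarks that the result is a consequence of Hardt's theorem, which is precisely the route you take. One very minor quibble: the final appeal to Tarski--Seidenberg is unnecessary, since the graph of $\varphi$ is a finite union of semi-algebraic sets and hence semi-algebraic directly; also, when $0\notin A$ your argument gives the punctured cone rather than the full cone, but this is a harmless imprecision already present in the statement as quoted.
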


 The following notation will be used heavily in the proof:

\begin{itemize}
\item For $y>0$, denote by $ C(x_0,y)$ the cube with faces perpendicular to the coordinate axes, side length $2y$ and centered at $x_0\in\mbb{R}^n$
\item For $x,z\in\mbb{R}^n$ with components $x_i$ and $z_i$, respectively, let $D(x,z)=\ds\max_{i=1,...n}|x_i-z_i|$.
\item For $x_0\in \mbb{R}^n$, denote by $D_{x_0}:\mbb{R}^n\rightarrow \mbb{R}$ the function $D_{x_0}(y)=D(y,x_0)$.
\item For $x_0\in\mbb{R}^n$ and $y>0$ and $T\in I_k(X)$, denote by $<T,D_{x_0},y>$  the slice of $T$ at $y$ by the Lipschitz function $D_{x_0}$, as defined in Proposition \ref{prop:slice}.
\end{itemize}

The main idea of the proof is induction on the ambient dimension. For the inductive step, we choose some integral curent $T \in I_k(X)$ with $\partial T=0$ and  we consider the intersection of  $X$ with the elements of a (finite) cover of X by cubes whose boundaries slice the current $T$ well, intersect$X$ with strictly smaller dimension, and inside of which $X$ is contractible. Roughly speaking, for a fixed cube $C$ of the decomposition, we then apply the inductive assumption to the slice of $T$ in the boundary of $C$ and use the contraction to construct a semi-algebraic current within $C$ whose boundary is contained in the boundary of $C$ and is homologous in the boundary of $C$ to the boundary of $T\restr (X\setminus C)$, then we use injectivity of $i_\ast$ (Proposition \ref{prop:inj}) to restart the process in $X\setminus C$, which is covered by fewer cubes. Iterating this process and keeping track of the changes we make in each cube then gives the result.

\begin{proof}[Proof of Proposition \ref{prop:surj}]

We first prove surjectivity in the case that $\dim(X)\leq n-1$, and we prove the statement by induction on the ambient dimension. If $X\subset \mbb{R}$, $X$ is a finite union of points, and the proof of the proposition is obvious.

Now suppose $X\subset \mbb{R}^n$, and choose $T\in I_k(X)$ with $\partial T=0$. By Remark \ref{rmk: compact} applied to $A=\spt(T)$ we may assume $X$ is compact. The following claim helps us to decompose $X$.

\begin{claim}
If $x_0\in X$, then there exists an $\epsilon > 0$ so that, for  $0<\delta < \epsilon$, $\dim\left(\partial C(x_0,\delta)\cap X\right)\leq n-2$. 
\end{claim}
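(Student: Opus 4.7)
The plan is to apply the stratification theorem recalled in Section~\ref{sec:elem} to the semi-algebraic Lipschitz function $D_{x_0} : X \to \R$, thereby reducing the dimension computation to an elementary rank calculation on finitely many smooth strata.

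First, I would fix a finite stratification $X = \bigcup_{i=1}^{q} V_i$ into connected real-analytic semi-algebraic submanifolds on which $D_{x_0}|_{V_i}$ is either a real-analytic diffeomorphism onto its image or of constant rank strictly less than $\dim V_i$. Because the target $\R$ is one-dimensional, the rank of $D_{x_0}|_{V_i}$ lies in $\{0,1\}$. This partitions the strata into \emph{flat} strata (rank $0$), on which $D_{x_0}$ is constant with some value $c_i \geq 0$, and \emph{transverse} strata (rank $1$), on which $D_{x_0}|_{V_i}$ is a real-analytic submersion onto an open interval of $\R$. Every fiber of such a submersion is a submanifold of $V_i$ of dimension $\dim V_i - 1 \leq \dim X - 1 \leq n - 2$.

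Next, I would observe that a flat stratum with $c_i = 0$ is contained in $D_{x_0}^{-1}(0) = \{x_0\}$, hence equals $\{x_0\}$, and so does not meet $\partial C(x_0, \delta)$ for any $\delta > 0$. The remaining flat strata have $c_i > 0$, and there are only finitely many such, so I can choose $\epsilon$ strictly smaller than every such $c_i$ (and arbitrary positive if no flat stratum has $c_i > 0$). Then for every $0 < \delta < \epsilon$ the level set $\partial C(x_0, \delta) = D_{x_0}^{-1}(\delta)$ avoids every flat stratum and meets each transverse stratum in a fiber of dimension at most $n - 2$. Since $\partial C(x_0, \delta) \cap X$ is a finite union of such fibers, its semi-algebraic dimension is at most $n - 2$, as claimed.

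The only genuine obstacle is ensuring that the finitely many positive constant values $c_i$ coming from the flat strata are avoided in a uniform punctured neighborhood of $0$; this is automatic from the finiteness of the stratification, so no deeper tools are required beyond the stratification theorem and the elementary fiber-dimension bound for a submersion.
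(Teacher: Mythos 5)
Your argument is correct, but it takes a genuinely different route from the paper's. The paper's proof is measure-theoretic: it regards $X$ as an $\mathcal{H}^{n-1}$-rectifiable set, applies the coarea formula to the Lipschitz function $D_{x_0}$ to conclude that $\mathcal{H}^{n-2}\left(X\cap\partial C(x_0,s)\right)<\infty$ for a.e.\ $s$, deduces $\dim\left(X\cap\partial C(x_0,\delta)\right)\le n-2$ for arbitrarily small $\delta$ by semi-algebraicity, and then invokes the local conic structure theorem (Theorem \ref{thm:LCS}) to make the dimension constant on a whole interval $(0,\epsilon)$. You instead stratify $D_{x_0}|_X$ as in Section \ref{sec:elem}, split the finitely many strata into those of constant rank $0$ (on which $D_{x_0}$ is constant) and constant rank $1$ (whose level sets are codimension-one submanifolds of the strata), and take $\epsilon$ below the smallest positive constant value; this yields the bound for every $\delta\in(0,\epsilon)$ in one step, with no measure theory and no appeal to conic structure, which is arguably more elementary and gives the uniform statement directly rather than upgrading an almost-everywhere statement. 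Two small points are worth making explicit. First, both arguments rely on the standing hypothesis, in force at this stage of the proof of Proposition \ref{prop:surj}, that $\dim X\le n-1$ (you use it via $\dim V_i-1\le\dim X-1\le n-2$, the paper via the $\mathcal{H}^{n-1}$-rectifiability of $X$); the claim is false for full-dimensional $X$, so this should be stated. Second, a one-dimensional stratum on which $D_{x_0}$ restricts to a diffeomorphism onto its image also falls under your rank-one case, with zero-dimensional fibers; this is harmless since the situation $X\subset\R$ is the separate base case of the induction, but it is the reason the fiber-dimension bound should be phrased as $\dim V_i-1$ rather than via the submersion picture alone.
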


\begin{proof}[Proof of Claim] First note that if $x_0$ is an isolated point of $X$, there is nothing to prove. If $x_0$ is nonisolated, by the coarea formula, if $C_1 d^X (D_{x_0})_y$ denotes the 1 dimensional coarea factor of the tangential differential of the Lipschitz function $D_{x_0}$ on the $H^{n-1}$ rectifiable set $X$ at the point $y$, then:

\begin{equation*}
 \int_0^1 \mathcal H^{n-2} (X\cap \partial C(x_0,s)) \, ds = \int_{X\cap C(x_0,1)} C_1 d^X (D_{x_0})_y d \, \mathcal H^{n-1}(y) 
 \end{equation*}
 
 \noindent (see, for example, Theorem 2.93 of \cite{AFP}). However, we may directly compute that $C_1 d^X (D_{x_0})_y< M$ for some constant $M$, which gives: 
 
 \begin{equation*}
\int_0^1 \mathcal H^{n-2} (X\cap \partial C(x_0,s)) \, ds \leq M \mathcal H^{n-1}(X\cap C(x_0,1)) < \infty
 \end{equation*}
   
 \noindent Therefore, for a.e. $s\in[0,1]$, $\mathcal H^{n-2}(X\cap \partial C(x_0,s)) < \infty$.  In particular, there are arbitrarily small $\delta$ so that  $\mathcal H^{n-2}(X\cap\partial C(x_0,\delta) ) < \infty$. However,  for any such $\delta$, $X \cap \partial C(x_0,\delta)$ is semi-algebraic and therefore the Hausdorff measure bound  implies  $\dim\left(\partial C(x_0,\delta)\cap X\right)\leq n-2$.  Finally, Theorem \ref{thm:LCS} guarantees that, for $\epsilon >0 $ small enough and $0 < \delta < \epsilon$ the dimension of the sets, $X \cap \partial C(x_0,\delta)$ is constant, and this gives the claim.
\end{proof}

  Invoking Corollary \ref{thm:retractions}, the compactness of $X$ and the above claim, we find a finite set of cubes $\{(C(x_i,y_i)\}_{i=1}^\ell$ such that:

\begin{enumerate}
\item For all $i$, $x_i\in X$ and $y_i > 0$.
\item For every $i$, there exists a semi-algebraic, Lipschitz deformation retraction to the point $x_i$,  $f_i:C(x_i,y_i)\cross I\rightarrow U(x_i,y_i)$ which preserves $X$, where $U(x_i,y_i)$ is some open neighborhood of $C(x_i,y_i)$.
\item For every $i$, $y_i$ is such that the slice $<T,D_{x_i},y_i>\in  I_{k-1}(X)$ exists.
\item $\dim\left(\partial C(x_i,y_i)\cap X\right)\leq n-2$.
\end{enumerate}

\noindent We now ``push off'' of each cube individually. If $X$ can be contained in just one cube then it is (semi-algebraically) Lipschitz contractible by Corollary \ref{thm:retractions}, and there is nothing to prove, so we assume in the following that $\ell > 1$ and that

\begin{center}
$X\subset \ds\bigcup_{i=1}^\ell C(x_i,y_i)$.
\end{center}

 \noindent Consider the slice $<T,D_{x_1},y_1>$. By Proposition \ref{prop:slice} (1), we have:

\begin{center}
$\spt(<T,D_{x_1},y_1>) \subset \{x\in\mbb{R}^n \ | D(x,x_1)=y_1\}\cap X$
\end{center}
\noindent and so  $<T,D_{x_1},y_1>$ is supported in
\begin{center}
$\partial C(x_1,y_1)\cap X $.
\end{center}

\noindent Further, Proposition \ref{prop:slice} (2) guarantees that
\begin{center}
 $<T,D_{x_1},y_1> = \partial(T \restr \{D_{x_1} > y_1\})$;
\end{center}
\noindent so we have that $\partial<T,D_{x_1},y_1>=0$.

However, $\partial C(x_1,y_1)\cap X $  is, by assumption, a semi-algebraic set of dimension at most $n-2$. By stratifying the cube compatibly with this intersection we may find a point $a_1\in \partial C(x_1,y_1)$ and an $\epsilon>0$ such that $B(a_1,\epsilon)\cap \partial(C(x_1,y_1)) \cap X = \phi$. Since $\partial C(x_1,y_1) \setminus B(a_1,\epsilon)$ is bilipschitz semi-algebraically homeomorphic to a subset of $\mbb{R}^{n-1}$. Noting Remark \ref{rmk:sblip} and our inductive hypothesis, we get:

 \begin{center}
 $<T,D_{x_1},y_1>=L_1+\partial F_1$
 \end{center} for $L_1\in S_{k-1}(X\cap \partial C(x_1,y_1))$ and $F_1\in I_{k}(X\cap \partial C(x_1,y_1))$.

Applying the homotopy formula we find:

\begin{center}
$\partial \left( T\restr C(x_1,y_1) - (f_1)_\sharp (L_1\cross \llb0,1\rrb) - F_1\right)=0$
\end{center}

\noindent  since $C(x_1,y_1)$ is contractible in a way that preserves $X$, we have that

\begin{center}
$T\restr C(x_1,y_1) - (f_1)_\sharp (L_1\cross \llb0,1\rrb) - F_1=\partial K_1$ \end{center}

\noindent for some $K_1\in I_{k+1}(X\cap C(x_1,y_1) )$.

Denote by $X_j=X\cap\left(\ds\bigcup_{k=j+1}^\ell C(x_k,y_k)\right)$.  Adding $T$ to both sides of the above and rearranging gives:

\begin{equation}
\label{eq:important}
T\restr( X\setminus C(x_1,y_1)) =T-\partial K_1-(f_1)_\sharp (L_1\cross \llb0,1\rrb) -F_1
\end{equation}

\noindent We now shift our focus to

\begin{center}
$T\restr (X\setminus C(x_1,y_1)) +F_1\in I_{k+1}(X_1 )$.
\end{center}

\noindent Equation \ref{eq:important} yields that:

\begin{center}
$-\partial \left((T\restr X\setminus C(x_1,y_1)) +F_1\right)=\partial \left((f_1)_\sharp (L_1\cross \llb0,1\rrb)\right)$.
\end{center}

\noindent Notice that $\partial \left((f_1)_\sharp (L_1\cross \llb0,1\rrb)\right)\in S_{k-1}(X_1)$. So, by Proposition \ref{prop:inj}, since $\partial \left((f_1)_\sharp (L_1\cross \llb0,1\rrb)\right)$ is the boundary of a rectifiable current in $I_{k}(X_1 )$ there is a semi-algebraic chain $S_1\in S_k(X_1)$ such that:
\begin{center}
$\partial S_1=-\partial \left((T\restr X\setminus C(x_1,y_1)) +F_1\right)$
\end{center}

Then, $S_1+(T\restr X\setminus C(x_1,y_1)) +F_1$ is a rectifiable cycle supported in $X_1$. Using Equation \ref{eq:important}, we get:

\begin{center}
$S_1+(T-\partial K_1-(f_1)_\sharp (L_1\cross \llb0,1\rrb) -F_1)+F_1$
\end{center}

\noindent is a cycle of ${I}_k(X_1)$. Applying the above discussion to this current and the semi-algebraic set $X_1$ gives:

\begin{equation}
\label{eq:step2}
S_2+\left(S_1+T-\partial K_1-(f_1)_\sharp (L_1\cross \llb0,1\rrb)\right)\restr (X_1\setminus C(x_2,y_2) )+F_2
\end{equation}

\noindent is an element of ${I}_k(X_2)$, for $S_2\in S_{k}(X_2)$ and $F_2\in I_{k}(X\cap \partial C(x_2,y_2))$.

On the other hand, in analogy with Equation \ref{eq:important} applied to:

\begin{center}
 $\left(S_1+T-\partial K_1-(f_1)_\sharp (L_1\cross \llb0,1\rrb)\right)\restr (X_1\setminus C(x_2,y_2))$
 \end{center}

 \noindent we may find  $L_2\in S_{k-1}(X_1\cap \partial C(x_2,y_2))$  and a $K_2\in I_{k+1}(X_1\cap \partial C(x_2,y_2) )$ so that:

\begin{equation}
\begin{array}{c}
(S_1+T-\partial K_1-(f_1)_\sharp (L_1\cross \llb0,1\rrb))\restr (X_1\setminus C(x_2,y_2))=\\
(S_1+T-\partial K_1-(f_1)_\sharp (L_1\cross \llb0,1\rrb))-\partial K_2-(f_2)_\sharp (L_2\cross \llb0,1\rrb) -F_2.
\end{array}
\end{equation}

\noindent Plugging this into Equation \ref{eq:step2}, we get that:

\begin{center}
$\begin{array}{c}
S_2+S_1+T-\partial K_1 - (f_1)_\sharp (L_1\cross \llb0,1\rrb)\\
 - \partial K_2 - (f_2)_\sharp (L_2\cross \llb0,1\rrb)
\end{array}$
\end{center}

\noindent is a cycle of ${I}_k(X_2)$. Continuing in this way, we get:

\begin{equation}
\label{eq:final}
T+\ds\sum_{i=1}^{\ell-1} \left(S_1-\partial K_1 - (f_i)_\sharp (L_i\cross \llb0,1\rrb)\right)
\end{equation}

\noindent is a cycle of ${I}_k(X_\ell)$, where  $F_i\in I_{k}(X\cap \partial C(x_i,y_i))$, $S_i\in S_{k}(X_i)$, $L_i\in S_{k-1}(X_i\cap \partial C(x_i,y_i))$. However, $X_\ell=X\cap C(x_\ell,y_\ell) $ is Lipschitz semi-algebraically contractible in a way which preserves $X$,  so we get:

\begin{center}
$T+\ds\sum_{i=1}^{\ell-1} \left(S_1-\partial K_1 - (f_i)_\sharp (L_i\cross \llb0,1\rrb)\right)=\partial K_\ell$
\end{center}

\noindent for some $K_\ell \in {I}_{k+1}(X_\ell)$. Rearranging terms gives:

\begin{center}
$T-\ds\sum_{i=1}^{\ell} \partial K_i=-\sum_{i=1}^{\ell-1} \left(S_1- (f_i)_\sharp (L_i\cross \llb0,1\rrb)\right)$.
\end{center}

\noindent  However, all the terms on the right hand side are semi-algebraic, so we have proved the proposition when $X \subset \R^n$ and $\dim(X) \leq n-1$.

The general case follows immediately from this special case--since any semi-algebraic set may be embedded in a higher dimensional space by simply adding on dimensions. More precisely, $X$ is semi-algebraically bilipschitz equivalent to $X\times \{0\}\subset \mbb{R}^{n+1}$.

\end{proof}

We now show that $i_\ast$ inducing an isomorphism between $H^{SA}_k(X)$ and $H^{IC}_k(X)$ implies the seemingly stronger result that $i_\ast$ induces an isomorphism on the relative homology groups, i.e. $H_k^{SA}(X,A)\cong H_k^{IC}(X,A)$, and this is Theorem \ref{thm:main}.

The proof has essentially the same two parts--for each $[T]\in H_k^{IC} (X,A)$, we need to find a semi-algebraic chain $S\in \mathcal{Z}_k(X,A)\cap [T]$, and, if $S \in [0] \in H_k^{IC}(X,A)$, we need to find  $Q\in S_{k+1}(X)$ and $L \in S_k(X)$ so that $S=L+\partial Q$.

\begin{proof}[Proof of Theorem \ref{thm:main}]
First, let $[T] \in H_k^{IC}(X,A)$ be given. Then $\spt(\partial T) \subset A$ and so $\partial T \in I_{k-1}(A)$.  Applying Proposition \ref{prop:surj} to the homology class $[\partial T] \in H_{k-1}^{IC} (A)$, we know that $\partial T = S_1 + \partial K_1$ for $S_1 \in S_{k-1} (A)$ and $K_1 \in I_k (A)$.

 As an element of $S_{k-1}(X)$, $S_1$ is the boundary of the rectifiable current $T-K_1$. So, by Proposition \ref{prop:inj} we know $S_1 = \partial S_2$ for $S_2 \in S_{k}(X)$. But then $\partial (T-K_1 - S_2)=0$ so again by Proposition \ref{prop:surj} we find $S_3\in S_k(X)$ and $K_2 \in I_{k+1} (X)$ so that $(T-K_1-S_2)=S_3 + \partial K_2$. Rearranging this gives $T-(S_2+S_3)= K_1 + \partial K_2$, which is to say that $(S_2 + S_3) \in [T]$.

 Next suppose that $S \in [0] \in H_k^{IC}(X,A)$. This is equivalent to saying $\spt(\partial(S))\subset A$ and the existence of $T \in I_k(A)$ and $F\in I_{k+1}(X)$ so that $S=T+\partial F$. Then, as an element of $I_k(A)$, $\partial S = \partial T$, and hence by Proposition \ref{prop:inj} we may find a semi-algebraic $S_1 \in S_k(A)$ so that $\partial S =\partial S_1$ (note that in general we could not have chosen $S_1=S$ since $S$ need not be supported in $A$).  Then, $\partial (T-S_1) = 0$ so by Proposition \ref{prop:surj} applied within $A$, we may find $S_2 \in S_{k+1}(A)$ and $K \in I_{k+1} (A)$ so that $T- S_1 = S_2 + \partial K$. Then:

 \begin{equation*}
 S-S_1 = (T-S_1) + \partial F = (S_2 + \partial K) + \partial F = S_2 + \partial ( K+F)
 \end{equation*}

 \noindent rearranging this equation gives:

 \begin{equation*}
 S-S_1-S_2 = \partial(K+F)
 \end{equation*}

 \noindent and so the semi-algebraic chain $S-S_1-S_2$ bounds a rectifiable current in $X$, so applying Proposition \ref{prop:inj} gives $S-S_1-S_2=\partial S_3$ for some $S_3 \in S_{k+1} (X)$ . Therefore, $S=(S_1 + S_2) + \partial (S_3)$, completing the proof.

\end{proof}

While the above isomorphism holds in all cases, if $X$ is compact and $A\subset X$ is closed, then  by semi-algebraicity, the pair $(X,A)$  is  triangulable as a finite simplicial complex. Further, one may check that the proof of the uniqueness theorem, Theorem 10.1  of  \cite[III]{FAT}  holds provided the admissible category includes all simplicial complexes embedded in Euclidean space and all simplicial maps from them to other elements of the category--the point is that the main isomorphism for the uniqueness result is defined by axiomatically computing  homology groups for very basic simplicial complexes which may be assumed to be embedded in high dimensional space and pushing forward generators of these groups.  This fact, combined with the results of Section \ref{sec:Eilenberg} proves the following:

\begin{corr}
If $(X,A)$ is a compact pair of semi-algebraic sets, then the homology groups $H^{IC}_\ast (X,A)$ are isomorphic to the ordinary singular homology groups of the pair $(X,A)$.
\end{corr}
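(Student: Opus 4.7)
The plan is to combine Theorem \ref{thm:main} with an Eilenberg-Steenrod style uniqueness argument. By Theorem \ref{thm:main} the inclusion induces an isomorphism $H^{IC}_\ast(X,A) \cong H^{SA}_\ast(X,A)$, so it is enough to exhibit an isomorphism between $H^{SA}_\ast(X,A)$ and the ordinary singular homology $H^{sing}_\ast(X,A)$. Both theories satisfy axioms (1)--(7): the semi-algebraic side is Theorem \ref{thm:EilenbergSteenrod}, and singular homology satisfies these axioms classically. Both also satisfy the axiom of compact support, as noted just after Theorem \ref{thm:EilenbergSteenrod} for the semi-algebraic theory.

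To apply a uniqueness theorem, I would first use the well-known fact that any compact pair of semi-algebraic sets $(X,A)$ admits a finite semi-algebraic triangulation; that is, there is a finite simplicial complex $K$ with subcomplex $L$ and a semi-algebraic homeomorphism $h:(|K|,|L|)\to (X,A)$ (this was already invoked in the proof of Proposition \ref{prop:inj}). Since $h$ and $h^{-1}$ are semi-algebraic and in particular admissible morphisms in the semi-algebraic category, and are continuous (hence admissible for singular theory), functoriality and the homotopy axiom reduce the comparison to the case of a finite simplicial pair embedded in some $\mathbb{R}^N$ with simplicial inclusions as structure maps.

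Next I would appeal to the standard uniqueness proof as given in \cite[III, Theorem 10.1]{FAT}: on a finite simplicial pair, any two homology theories satisfying axioms (1)--(7) are connected by a natural isomorphism, provided the admissible category contains all simplicial complexes embedded in Euclidean space together with their simplicial maps. Both the semi-algebraic category (since simplicial maps between Euclidean simplicial complexes are piecewise linear, hence semi-algebraic) and the continuous category satisfy this requirement. The uniqueness proof proceeds by induction on the number of simplices, using only the long exact sequence, excision, homotopy invariance, and the point-axiom computation $H_0(\{a\}) = \mathbb{Z}$, $H_k(\{a\}) = 0$ for $k > 0$ to pin down the generators. Since every step only requires pushing forward standard generators through admissible simplicial maps, the induced natural transformation $H^{sing}_\ast \to H^{SA}_\ast$ is an isomorphism on every finite simplicial pair, and in particular on $(X,A)$.

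The only subtlety—and what I would regard as the main obstacle—is verifying that the class of admissible morphisms in the semi-algebraic setting is rich enough for the uniqueness machinery to run. Concretely, one must check that every simplicial map between Euclidean simplicial complexes is indeed a semi-algebraic map, and that the auxiliary constructions appearing in the proof (inclusions of subcomplexes, collapses, homotopies between simplicial approximations) can all be realized by semi-algebraic maps of pairs. Each of these is routine since piecewise linear maps have graphs defined by finitely many linear equalities and inequalities, but these verifications form the technical core of the argument. Once they are in place, chaining the isomorphisms $H^{IC}_\ast(X,A) \cong H^{SA}_\ast(X,A) \cong H^{sing}_\ast(X,A)$ yields the corollary.
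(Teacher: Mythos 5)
Your argument is correct and follows essentially the same route as the paper: reduce via Theorem \ref{thm:main} to comparing $H^{SA}_\ast$ with singular homology, triangulate the compact pair semi-algebraically as a finite simplicial pair, and invoke the uniqueness theorem of \cite[III, Theorem 10.1]{FAT} after checking that simplicial maps of Euclidean complexes are admissible (piecewise linear, hence semi-algebraic) morphisms. The paper's proof is exactly this sketch, so no further comparison is needed.
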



\section{Mass Minimization in Homology Classes}

As mentioned in the introduction, our main result gives mass minimization in the homology classes of semi-algebraic sets. More precisely, we have the following:

\begin{thm}
For a compact semi-algebraic set $X\subset \R^m$ and $[S]\in H_k(X)$ a semi-algebraic homology class there exists a rectifiable representative $T\in [S]$ with
\begin{equation*}
 \mbb{M}(T) = \ds\inf_{L\in[S]} \mbb{M}(L).
 \end{equation*}

\end{thm}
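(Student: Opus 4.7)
The plan is to run the classical Federer--Fleming mass minimization argument in the semi-algebraic setting, using Theorem \ref{thm:main} together with the semi-algebraic retraction $\td r : E \to X$ from an open semi-algebraic neighborhood $E$ of $X$ (constructed in the proof of Proposition \ref{prop:inj}) to compensate for the possible absence of a Lipschitz neighborhood retraction onto $X$.

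Via Theorem \ref{thm:main} view $[S]$ as a class in $H^{IC}_k(X)$ and let $\mu := \inf_{L \in [S]} \mbb{M}(L)$, the infimum taken over $L \in I_k(X)$ representing the class. Since $A = \phi$, every such representative is a cycle. Select a minimizing sequence $\{T_i\} \subset I_k(X)$ with $\partial T_i = 0$ and $\mbb{M}(T_i) \to \mu$. The Federer--Fleming compactness theorem \cite[4.2.17]{GMT}, applied to currents in the compact set $X$ with uniformly bounded mass and vanishing boundary mass, yields a subsequence converging in the flat norm $\mathcal{F}_X$ to an integral current $T \in I_k(X)$ with $\partial T = 0$, and lower semi-continuity of mass gives $\mbb{M}(T) \leq \mu$.

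The remaining, principal obstacle is to verify $T \in [S]$, which would force $\mbb{M}(T) = \mu$. Flat convergence inside $X$ yields decompositions $T_i - T = V_i + \partial U_i$ with $V_i \in I_k(X)$, $U_i \in I_{k+1}(X)$, and $\mbb{M}(V_i) + \mbb{M}(U_i) \to 0$; in particular $\partial V_i = \partial(T_i - T) = 0$. Fix $\epsilon > 0$ small enough that the $2m\epsilon$-neighborhood of $X$ in $\R^m$ lies inside $E$, and apply the Deformation Theorem \ref{thm:DFT} to each $V_i$ using a common $\epsilon$-cubical decomposition of $\R^m$: since $\partial V_i = 0$, the $Q$-piece vanishes and one obtains $V_i = P_i + \partial L_i$ with $P_i$ a polyhedral $k$-chain on the fixed $\epsilon$-grid, $\mbb{M}(P_i) \leq \kappa \mbb{M}(V_i) \to 0$, and $L_i \in I_{k+1}(E)$. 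Because every nonzero integer-coefficient polyhedral $k$-chain on the $\epsilon$-grid has mass at least $\epsilon^k$, eventually $P_i = 0$, whence $V_i = \partial L_i$ in $E$, i.e.\ $[V_i] = 0$ in $H^{IC}_k(E)$.

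Finally, since $\td r : E \to X$ is a semi-algebraic retraction with $\td r \circ j = \mathrm{id}_X$ for $j : X \hookrightarrow E$, the chain-level map $j_\sharp : S_k(X) \to S_k(E)$ is split injective, so the induced map on semi-algebraic homology $j_\sharp : H^{SA}_k(X) \to H^{SA}_k(E)$ is injective. Combining with Theorem \ref{thm:main} applied to both $X$ and the semi-algebraic set $E$, the inclusion induces an injection $H^{IC}_k(X) \hookrightarrow H^{IC}_k(E)$. Since $V_i \in I_k(X)$ maps to the trivial class in $H^{IC}_k(E)$, its class in $H^{IC}_k(X)$ must also be trivial, so $V_i = \partial R_i$ for some $R_i \in I_{k+1}(X)$. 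Therefore $T_i - T = \partial(R_i + U_i)$ with $R_i + U_i \in I_{k+1}(X)$, giving $T \in [T_i] = [S]$ and hence $\mbb{M}(T) = \mu$, completing the proof.
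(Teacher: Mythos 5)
Your proposal is correct, but it reaches the conclusion by a genuinely different route at the compactness step. The paper applies the Deformation Theorem to the fillings $R_\ell$ with $S-T_\ell=\partial R_\ell$; since $\mass(R_\ell)$ is uncontrolled, it must replace the polyhedral parts $P_\ell$ by uniformly mass-bounded polyhedral chains $F_\ell$ with $\partial F_\ell=\partial P_\ell$ via the linear-algebra argument on the fixed finite grid, and then pass to weak limits to obtain $S-T=\partial F+\partial Q$ in the neighborhood. You instead invoke compactness in the form of $\mathcal F_X$-convergence (the flat norm relative to the compact set $X$, \cite[4.2.17]{GMT}), which yields decompositions $T_i-T=V_i+\partial U_i$ \emph{supported in $X$} with vanishing masses, and you deform only the small cycles $V_i$: since $\partial V_i=0$ the $Q$-term vanishes, and the quantization of polyhedral mass on the $\epsilon$-grid forces $P_i=0$ eventually, so $V_i=\partial L_i$ bounds in $E$. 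This trades the paper's bounded-filling replacement for the ``cycles of small mass bound'' mechanism (the same idea as the paper's final section), and your concluding step is a cleaner, more explicit version of the paper's compressed retraction argument: you transfer the nullhomology from $E$ back to $X$ by noting that $\td r_\sharp$ splits $j_\sharp$ and hence, via Theorem \ref{thm:main} applied to both $X$ and $E$, the map $H^{IC}_k(X)\rightarrow H^{IC}_k(E)$ is injective. Two small points should be made explicit: injectivity on homology follows because the splitting $\td r_\sharp$ is a \emph{chain} map (split injectivity of the chain groups alone would not suffice), and before invoking Theorem \ref{thm:main} for $E$ you should observe that $E=h(N(L))\cap\mathring{C}$ is semi-algebraic, or simply replace $E$ by the semi-algebraic neighborhood $X_\delta=\{x \ | \ \text{dist}(x,X)<\delta\}\subset E$ furnished by Claim \ref{claim:dist}, as the paper does.
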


\begin{proof}
Let $\td r :X_\delta\rightarrow X$ denote the retraction from some $\delta$ neighborhood of $X$ contained in $h(N(L))$ constructed in the proof of Proposition \ref{prop:inj}, and choose a minimizing sequence $\{T_\ell\}_{\ell=1}^\infty\subset I_{k}(X)$ such that:
\begin{enumerate}
\item $T_\ell \in [S]$, i.e. there is a sequence $\{R_\ell\}_{\ell=1}^{\infty}\subset I_{k+1}$ so that $S-T_\ell = \bd R_\ell$ 
\item $\mass(T_\ell) \rightarrow \ds \inf_{L\in[S]} \mbb{M}(L)$.
\end{enumerate}

\noindent Let $\epsilon < \tfrac{\delta}{2n}$. Since $\{R_\ell\}_{\ell=1}^{\infty}$ is countable, we may find a cubical decomposition of $\R^m$ which is in good position for every $R_\ell$ of side length $\epsilon$ so that all cubes intersecting $X$ are contained in $X_\delta$. By compactness, there are only finitely many cubes of the decomposition which intersect $X_{\frac{\delta}{2}}$; denote by $\{A_i\}_{i=1}^q$ and $\{B_j\}_{j=1}^p$ their $k+1$ and $k$ faces, respectively.

Applying the Deformation Theorem (Theorem \ref{thm:DFT}) to each $R_\ell$ gives: $R_\ell = P_\ell + Q_\ell +\partial L_\ell$ so that:

\begin{enumerate}
\item $P_\ell, \ Q_\ell$ and $L_\ell$ are supported in $X_\delta$ for every $\ell$
\item $\mass(Q) \leq \epsilon \kappa \mass(\partial R_\ell) =\epsilon \kappa \mass(S-T_\ell) \leq 2 \epsilon \kappa \mass(S)$
\item $\mass(\partial P_\ell) < \kappa \mass(\partial R_\ell) = \kappa \mass(S-T_\ell)$
\item $P_\ell$ is a sum of the form $\ds \sum_{i=1}^q \alpha(i,\ell) \llb A_i \rrb $.
\end{enumerate}

\noindent This gives $S-T_\ell = \partial P_\ell + \partial Q_\ell$. So, since $\{\mass(\partial P_\ell)\}_{\ell=1}^\infty$ is bounded, the collection $\{\mass(\partial Q_\ell)\}_{\ell=1}^\infty$ is also bounded, and therefore we may apply the well known compactness theorem for integral currents to obtain (up to relabeling) weakly convergent subsequences and currents $T\in I_k(X)$, $Q\in I_{k+1}(X_\delta)$ so that $T_\ell \warrow T$, $Q_\ell \warrow Q$.

To apply a similar compactness result to the collection $\{P_\ell\}_{\ell=1}^{\infty}$, we need to uniformly bound the mass of the collection. Since each $P_\ell$ is supported on the same finite polyhedral skeleton, this amounts to proving bounds on the coefficients $\alpha(i, \ell)$. Let $V$ be the vector space generated by the $A_i$'s and $W$ be the vector space generated by the $B_i$'s. Then the boundary operator may be represented by a linear map $D : V \rightarrow W$ whose entries are all $\pm 1$ or 0 and, since $V$ and $W$ are finite dimensional is a bounded operator.

Put $\partial P_\ell = \sum_{j=1}^p \beta(j,\ell) \llb B_j \rrb$ and $\vec{\beta}_\ell=(\beta(1,\ell) , \cdots , \beta(p, \ell) )$. Then, for each $\ell \in \mbb{N}$, we have a linear system $D \vec{x} = \vec{\beta}_\ell$. However, since the mass of $\partial P_\ell$ is uniformly bounded, $|\vec{\beta}_\ell  |$ is uniformly bounded. Since each $\vec\beta_\ell$ is integer valued, this gives only finitely many such vectors. Finally, by assumption, for each $\ell$ we have a solution to the linear system $D \vec{x} = \vec{\beta}_\ell$, but since there are only finitely many such systems, by choosing a particular solution for each one we immediately get density bounds for some polyhedral sequence $F_\ell \in I_{k+1}(X_\delta)$ with $\partial F_\ell = \partial P_\ell$.  Compactness gives $F\in I_{k+1}(X_\delta)$ so that $F_\ell \warrow F$. This gives:

\begin{equation*}
S-T = \partial F +\partial Q.
\end{equation*}

\noindent However, this implies that $S-T$ is also null-homologous in $I_k(X)$ since, applying Proposition \ref{prop:surj} in $X_\delta$ we get $[S-T]=[L]=[0]$ for $L\in S_k(X_\delta)$. Proposition \ref{prop:inj} then guarantees $L=\partial B$ for some $B\in S_{k+1}(X_\delta)$ Applying the retraction (in $I_k(X)$) gives $[S-T]=[\td r_{\sharp} (L)]=[\td r_{\sharp} (\partial B)] = [\partial \td r_{\sharp} (B) ] = [0]$. The proof is completed upon referencing the lower semicontinuity of mass with regards to weak convergence.

\end{proof}


\section{Homological Triviality of Small Cycles}

Finally, we use our main theorem to show that, in the compact semi-algebraic case, we can guarantee that all rectifiable cycles of sufficiently small mass bound. This follows from an analogous result for Lipschitz neighborhood retracts, shown in \cite{FFCurrents}.

\begin{prop}
For any compact semi-algebraic set $X\subset \mbb{R}^n$, there is an $\epsilon > 0$ so that if $T \in I_k(X)$ has $\bd T =0$ and $\mass(T) \leq \epsilon$, then there is a $F \in I_{k+1}(X)$ so that $T=\bd F$.
\end{prop}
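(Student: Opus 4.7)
The plan is to mimic the argument of Federer--Fleming \cite[9.6]{FFCurrents} for Euclidean Lipschitz neighborhood retracts: use the Deformation Theorem to write $T$ as the boundary of an integral chain supported in a small neighborhood of $X$, and then retract that chain back into $X$. The obstacle is that the retraction $\td r$ produced in the proof of Proposition \ref{prop:inj} is only known to be semi-algebraic, not Lipschitz, so one cannot directly push forward a generic integral chain by it. To get around this I would apply Propositions \ref{prop:surj} and \ref{prop:inj} to replace the integral bounding chain by a semi-algebraic surrogate \emph{before} retracting, since the pushforward $\td r_\sharp$ is well behaved on semi-algebraic chains by the functoriality developed in Section \ref{sec:elem}.

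From the proof of Proposition \ref{prop:inj}, take the open semi-algebraic neighborhood $E \supset X$ and the semi-algebraic retraction $\td r : E \to X$. Compactness of $X$ provides $\delta > 0$ with $X_\delta := \{x : \text{dist}(x,X) < \delta\} \subset E$, and Claim \ref{claim:dist} makes $X_\delta$ semi-algebraic. Choose $\epsilon > 0$ with $2n\epsilon < \delta$, let $\kappa$ be the constant from Theorem \ref{thm:DFT}, and set $\epsilon_0 := \epsilon^k/\kappa$. Given $T \in I_k(X)$ with $\partial T = 0$ and $\mass(T) < \epsilon_0$, apply Theorem \ref{thm:DFT} at scale $\epsilon$ to get $T = P + Q + \partial L$ with all supports in $X_\delta$. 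Since $\partial T = 0$, the bound $\mass(Q) \leq \epsilon \kappa \mass(\partial T)$ forces $Q = 0$, and $\mass(P) \leq \kappa \mass(T) < \epsilon^k$ forces $P = 0$ because $P$ is a $\mathbb Z$-linear combination of $k$-faces of the $\epsilon$-cubical grid, each of mass $\epsilon^k$. Hence $T = \partial L$ for some $L \in I_{k+1}(X_\delta)$.

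Now produce the semi-algebraic surrogate and retract. By Proposition \ref{prop:surj} applied in $X$, write $T = S + \partial K$ with $S \in S_k(X)$ a cycle and $K \in I_{k+1}(X)$. Then $S = \partial(L-K)$ exhibits the semi-algebraic cycle $S$ as the boundary of an integral current in $X_\delta$, so Proposition \ref{prop:inj} applied within $X_\delta$ yields $M \in S_{k+1}(X_\delta)$ with $\partial M = S$. Because $\td r$ is semi-algebraic, the pushforward $\td r_\sharp M \in S_{k+1}(X)$ is well defined (no Lipschitz hypothesis is required on $\td r$ for its action on semi-algebraic chains), and because $\td r|_X = \text{id}_X$ the definition of $g_\sharp$ immediately yields $\td r_\sharp S = S$. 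Proposition \ref{prop:functoriality} then gives
\begin{equation*}
\partial \td r_\sharp M \ =\ \td r_\sharp \partial M \ =\ \td r_\sharp S \ =\ S,
\end{equation*}
so $T = S + \partial K = \partial(\td r_\sharp M + K)$, and $F := \td r_\sharp M + K \in I_{k+1}(X)$ is the required bounding chain.
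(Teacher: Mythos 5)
Your proof is correct and takes essentially the same route as the paper: show the small cycle $T$ bounds an integral current in the semi-algebraic neighborhood $X_\delta$, replace $T$ by a homologous semi-algebraic cycle using Proposition \ref{prop:surj}, obtain a semi-algebraic bounding chain in $X_\delta$ via Proposition \ref{prop:inj}, and push it back into $X$ with $\td r_\sharp$. The only (harmless) difference is that where the paper simply cites \cite[9.6 (3)]{FFCurrents} for the existence of the mass threshold in $X_\delta$, you rederive that step directly from Theorem \ref{thm:DFT} with the explicit constant $\epsilon^k/\kappa$, which is the same underlying argument.
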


\begin{proof}
Let $\td r: X_\delta \rightarrow X$ denote the neighborhood of $X$ and the retraction constructed in the proof of Proposition \ref{prop:inj}. By \cite[9.6 (3)]{FFCurrents} there is an $\epsilon > 0$ so that the proposition holds for $X_\delta$. Let $T \in I_k(X)$  be as above with this choice of $\epsilon$.

By Proposition \ref{prop:surj}, there is a semi-algebraic $S \in [T]$. However, in $X_\delta$, $T=\partial F$, so, if $S-T = \partial K$, then $S=\partial (K+F)$. Applying Proposition \ref{prop:inj} to $S$ in $X_\delta$, we find $S=\partial L$ for $L \in S_{k+1} (X_\delta)$. However, then $[T]=[S]= [\partial \td r_{\sharp} (L)]=[0]$, completing the proof.
\end{proof}



\bibliographystyle{plain} \bibliography{reference}

\end{document}